\theoremstyle{plain}
\newtheorem{thm}{Theorem}[section]
\newtheorem{lemma}[thm]{Lemma}
\newtheorem{prop}[thm]{Proposition}
\newtheorem{cor}[thm]{Corollary}
\theoremstyle{definition}
\newtheorem{defn}[thm]{Definition}
\theoremstyle{remark}
\newtheorem*{thank}{{\bf Acknowledgments}}
\newcommand{\nc}{\newcommand}
\def\makeop#1{\expandafter\def\csname#1\endcsname
  {\mathop{\rm #1}\nolimits}\ignorespaces}
\def\makebb#1{\expandafter\def
  \csname bb#1\endcsname{{\mathbb{#1}}}\ignorespaces}
\def\makebf#1{\expandafter\def\csname bf#1\endcsname{{\bf
      #1}}\ignorespaces} 
\def\makegr#1{\expandafter\def
  \csname gr#1\endcsname{{\mathfrak{#1}}}\ignorespaces}
\def\makescr#1{\expandafter\def
  \csname scr#1\endcsname{{\EuScript{#1}}}\ignorespaces}
\def\makecal#1{\expandafter\def\csname cal#1\endcsname{{\mathcal
      #1}}\ignorespaces} 
\def\doLetters#1{#1A #1B #1C #1D #1E #1F #1G #1H #1I #1J #1K #1L #1M
                 #1N #1O #1P #1Q #1R #1S #1T #1U #1V #1W #1X #1Y #1Z}
\def\doletters#1{#1a #1b #1c #1d #1e #1f #1g #1h #1i #1j #1k #1l #1m
                 #1n #1o #1p #1q #1r #1s #1t #1u #1v #1w #1x #1y #1z}
     \def\qed{\qedmark\medbreak}%
\def\qedmark{{\enspace\vrule height 6pt width 5pt depth 1.5pt}}%
\def\Fpbar{\overline{\bbF}_p}
\def\Fp{{\bbF}_p}
\def\Qp{{\bbQ}_p}
\def\Zp{{\bbZ}_p}
\def\N{{\bbN}}
\newcommand{\Z}{\mathbb Z}
\newcommand{\Q}{\mathbb Q}
\newcommand{\R}{\mathbb R}
\newcommand{\A}{\mathbb A}    
\newcommand{\F}{\mathbb F}
\nc{\embed}{\hookrightarrow}
\newcommand{\ch}{characteristic }
\newcommand{\ac}{algebraically closed }
\newcommand{\dieu}{Dieudonn\'{e} }
\nc{\ol}{\overline}
\nc{\wt}{\widetilde}
\nc{\opp}{\mathrm{opp}}
\begin{document}
\renewcommand{\thefootnote}{\fnsymbol{footnote}}
\setcounter{footnote}{-1}
\numberwithin{equation}{section}


\title[Superspecial abelian varieties]
{Superspecial abelian varieties over finite prime fields}
\author{Chia-Fu Yu}
\address{
Institute of Mathematics, Academia Sinica and NCTS (Taipei Office)\\
6th Floor, Astronomy Mathematics Building \\
No. 1, Roosevelt Rd. Sec. 4 \\ 
Taipei, Taiwan, 10617} 
\email{chiafu@math.sinica.edu.tw}

\date{\today}
\subjclass[2000]{14K10, 11G10, 14G15, 11E41}
\keywords{superspecial abelian varieties, class numbers, finite fields}

\begin{abstract}
In this paper we determine the number of isomorphism classes of
superspecial abelian varieties $A$ over the prime field $\Fp$ 
such that the relative Frobenius morphism $\pi_A$ satisfying 
$\pi_A^2=-p$. 
\end{abstract} 

\maketitle


\section{Introduction}
\label{sec:01}

Let $p$ be a rational prime number and $g\ge 1$ be a positive
integer. An abelian variety over a field $k$ of \ch $p$ is said to be 
{\it superspecial} if it is isomorphic to a product of supersingular
elliptic curves over an algebraic closure $\bar k$ of $k$. Let $E_0$
be a supersingular elliptic curve over $\Fp$ such that
$\pi_{E_0}^2=-p$, where $\pi_{E_0}$ is the relative Frobenius
endomorphism of $E_0$. Such an elliptic
curve exists (see Deuring \cite{deuring} or use the Honda-Tate theory
\cite[Theorem 1, p.~96]{tate:ht})
and the condition $\pi_{E_0}^2=-p$ is automatic if $p>3$;
the latter follows from the Hasse-Weil bound for eigenvalues of the
Frobenius. Let $\scrS$ be the set of isomorphism
classes of $g$-dimensional superspecial abelian
varieties $A$ over $\Fp$ such that there is an isogeny from
$E_0^g$ to $A$ over $\Fp$. Using a theorem of Tate \cite[Theorem 1
(c), p.~139]{tate:eav}, the
condition for $A$ isogenous to $E_0^g$ over $\Fp$ is equivalent to that 
the relative Frobenius morphism
$\pi_{A}$ of $A$ over $\Fp$ satisfies $\pi_{A}^2=-p$ 
(also see Lemma~\ref{22}). 
In this paper we calculate the number of these
superspecial abelian varieties. 

\begin{thm}\label{11}
  Notation as above, we have 
\begin{equation}
    \label{eq:11}
 |\scrS|=
\begin{cases}
  h(\sqrt{-p}), & \text{if $p=2$ or $p\equiv 1\ (\,{\rm mod}\, 4)$},
  \\ 
  (g+1) h(\sqrt{-p}), & \text{if $p\equiv 7 \ (\,{\rm mod}\, 8)$ or
  $p=3$}, \\ 
  (g+3) h(\sqrt{-p}), & \text{if $p\equiv 3 \ (\,{\rm mod}\, 8)$ and
  $p\neq 3$,}   
\end{cases}    
\end{equation}
where $h(\sqrt{-p})$ denotes the class number of the imaginary quadratic
field $\Q(\sqrt{-p})$.
\end{thm}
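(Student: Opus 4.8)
The plan is to translate the problem into a count of lattices over a quadratic order and then carry out that count place by place. Write $K=\Q(\sqrt{-p})$ and $\pi=\pi_A$, so $\pi^2=-p$ forces $\Q(\pi)\cong K$, and by Tate's theorem $A$ is isogenous to $E_0^g$ with $\End^0(A)\cong M_g(K)$. The decisive observation is that over the prime field the Dieudonn\'e module $M$ of $A$ carries $F=\pi$ and, since $FV=p$ and $\pi^2=-p$, also $V=-\pi$; hence $\ker F=\ker V$, so $A$ is automatically superspecial, and $M$ is merely an $\O_{K,\mathfrak p}$-lattice (here $\mathfrak p=(\sqrt{-p})$ is the ramified prime and $\Z_p[\pi]=\O_{K,\mathfrak p}$ is already the maximal order $W$). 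First I would make precise, via the lattice description of an isogeny class over a finite field (see also Lemma~\ref{22}), a bijection between $\scrS$ and the isomorphism classes of rank-$g$ $R$-lattices in $K^g$, where $R=\Z[\pi]=\Z[\sqrt{-p}]$: at each finite place the integral datum (a Tate module for $\ell\ne p$, the Dieudonn\'e module for $\ell=p$) is an unconstrained $R_\ell$-lattice, any $\O_{K,\mathfrak p}$-lattice being automatically $F,V$-stable, and two such global systems yield isomorphic abelian varieties exactly when the associated $R$-modules are isomorphic.

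Next I reduce to pure lattice arithmetic. The order $R$ is maximal at every prime except possibly $2$: one has $R=\O_K$ precisely when $p=2$ or $p\equiv 1\pmod 4$, whereas for $p\equiv 3\pmod 4$ the localization $R_2=\Z_2+2\O_{K,2}$ is the order of conductor $2$. The second step is the local classification at $2$: every rank-$g$ $R_2$-lattice is isomorphic to $R_2^{\,a}\oplus\O_{K,2}^{\,g-a}$ for a unique $a\in\{0,1,\dots,g\}$, so the global lattices fall into $g+1$ genera $T_a$ distinguished by their local type at $2$ (at all other places a lattice is free over the maximal order). In the maximal case $R=\O_K$ there is a single genus, and by the Steinitz theorem the rank-$g$ projective $\O_K$-modules are classified by their determinant in $\Pic(\O_K)$, giving $|\scrS|=|\Pic(\O_K)|=h(\sqrt{-p})$.

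For $p\equiv 3\pmod 4$ the third step is to count the classes in each genus and sum. The genus $T_g$ consists of the lattices locally free of rank $g$ over $R$; by Serre's splitting and Bass cancellation these are $R^{\,g-1}\oplus I$ with $I$ an invertible $R$-ideal, hence classified by $\det\in\Pic(R)$, contributing $|\Pic(R)|=h(R)$ classes. Each mixed genus $T_a$ with $0\le a\le g-1$ carries an $\O_{K,2}$-summand, so $L\mapsto\O_K\cdot L$ realizes every class as a sublattice of fixed cotype at $2$ inside an $\O_K$-lattice; such an $L$ has determinant locally maximal at $2$, so it lives in $\Pic(\O_K)$, and strong approximation for $\SL_g$ (valid for $g\ge 2$, where $\SL_g(\C)$ is noncompact) shows the sublattices of fixed cotype form a single orbit under $\Aut_{\O_K}(\O_K\cdot L)$ modulo the determinant. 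Thus each of these $g$ genera contributes exactly $|\Pic(\O_K)|=h(\sqrt{-p})$ classes, and summation gives $|\scrS|=h(R)+g\,h(\sqrt{-p})$. Inserting the class-number formula $h(R)=h(\sqrt{-p})\,f\prod_{\ell\mid f}(1-(\tfrac{K}{\ell})\ell^{-1})/[\O_K^\times:R^\times]$ with conductor $f=2$ yields $h(R)=h(\sqrt{-p})$ when $2$ splits ($p\equiv 7\pmod 8$) or $p=3$, and $h(R)=3\,h(\sqrt{-p})$ when $2$ is inert ($p\equiv 3\pmod 8$, $p\ne 3$), which is exactly the trichotomy claimed.

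The main obstacle is the per-genus count of the third step. Since $K$ is imaginary quadratic the Eichler condition fails, so strong approximation cannot be invoked blindly; the delicate point is that this failure is concentrated in the single locally free genus $T_g$, where $\Pic(R)$ genuinely exceeds $\Pic(\O_K)$, while for the mixed genera the $\O_{K,2}$-summand maximizes the determinant at $2$ and collapses the count back to $h(\sqrt{-p})$. Tracking the finite unit groups is the subtle ingredient: it accounts for the exceptional value of $h(R)$ at $p=3$, where $[\O_K^\times:R^\times]=3$ forces that case to land with $p\equiv 7\pmod 8$ rather than with $p\equiv 3\pmod 8$. Establishing the first-step bijection rigorously—in particular that every rank-$g$ $R$-lattice is algebraizable to a genuine abelian variety in the isogeny class of $E_0^g$—is the other point that must be handled with care.
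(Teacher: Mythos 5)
Your proposal is correct and takes essentially the same route as the paper: reduce $\scrS$ to isomorphism classes of $R$-lattices in $E^g$ (the paper's Theorem~\ref{41}), split into the $g+1$ genera determined by the local classification $R_2^a\oplus O_{E_2}^{g-a}$ at the prime $2$ when $p\equiv 3\pmod 4$ (Theorem~\ref{31} and Corollary~\ref{32}), and count each genus by a class number via the determinant map and strong approximation for $\SL_g$ over $E$ (Lemma~\ref{51}). The only divergence is bookkeeping: for the locally free genus you invoke Serre--Bass to identify the count with $\Pic(R)$ and then cite the classical conductor formula for $h(R)$ (whose unit index $[O_E^\times:R^\times]=3$ accounts for the exceptional case $p=3$), whereas the paper evaluates the same quantity directly as $[\hat O_E^\times:O_E^\times\hat R^\times]\,h(\sqrt{-p})$ via an exact sequence of idele class groups --- identical arithmetic in different packaging.
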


It remains to discuss some background on the topic. First of all, it
is well-known (due to Deuring \cite{deuring}) that every supersingular
elliptic curve over an \ac field of \ch $p$ has a model defined over
$\F_{p^2}$. Also, if we let  
$B_{p,\infty}$
denote the quaternion algebra over $\Q$ ramified exactly at $p$ and
$\infty$, then the set of isomorphism
classes of supersingular elliptic curves over $\Fpbar$ is in one-to-one
correspondence with the set of ideal classes of a maximal order of
the quaternion algebra $B_{p,\infty}$. 
The same picture can be generalized to higher
dimensions: (a) there is only one isomorphism classes of superspecial
abelian varieties of dimension $>1$ over $\Fpbar$ (this is due to
Deligne, Ogus and Shioda), and 
(b) the set $\Lambda_g$ of isomorphism classes of
superspecial principally polarized abelian varieties over $\Fpbar$ has
the similar description as a double coset space for 
an algebraic group $G$ associated to certain quaternion hermitian form 
(see Ibukiyama-Katsura-Oort
\cite[Theorem~2.10]{ibukiyama-katsura-oort}). 
The class number $|\Lambda_g|$ is
calculated by Deuring \cite{deuring,deuring:jdm50} and Eichler \cite{eichler} 
for $g=1$ and by Hashimoto and Ibukiyama
\cite{hashimoto-ibukiyama:classnumber} for $g=2$. 
It is believed after \cite{hashimoto-ibukiyama:classnumber} that
calculating the class number $|\Lambda_g|$ for higher genus $g$ is 
an extremely difficult task. However, exploring some structures 
and relationships among them arising from $\Lambda_g$ is still 
interesting. 

The well-known Deuring-Eichler mass formula suggests 
that instead of calculating
the class number $|\Lambda_g|$ itself, the weighted version, ${\rm
  Mass}(\Lambda_g)$, in which one associates each object $(A,\lambda)$
to the weight $\# \Aut(A,\lambda)^{-1}$ and sums over the objects in 
$\Lambda_g$, 
should be more accessible (see \cite[Introduction]{yu:smf} for a
discussion). 
This is done for $\Lambda_g$, namely the case of Siegel modular 
varieties, 
by Ekedahl \cite[p.~159]{ekedahl:ss} and some others (see
Hashimoto-Ibukiyama 
\cite[Proposition 9, p.~568]{hashimoto-ibukiyama:classnumber}, 
and 
Katsura-Oort \cite[Section 2, Theorems~5.1 and 5.3]{katsura-oort:surface}). 
The analogous formula for the mass associated to the set
of superspecial polarized abelian varieties with real multiplication 
is showed in \cite[Theorem~3.7 and Subsection~4.6]{yu:mass_hb}. 
This result is applied for determining
the number of supersingular components (those are entirely contained
in the supersingular locus) of the reduction of Hilbert
modular varieties with Iwahori level structure; see
\cite[Section~4]{yu:gamma_hb}.  
The geometric mass formula is generalized to good 
reduction of quaternion Shimura varieties; see
\cite[Theorem~1.2]{yu:gmf} for the precise formula. 
The proof of this mass formula uses an arithmetic
mass formula obtained by Shimura \cite[Introduction, p.~68]{shimura:1999} 
and the connection between geometric
and arithmetic masses (see \cite[Theorem~2.2]{yu:smf} for the precise
statement). 

In the function field analogue where
superspecial abelian varieties are replaced by supersingular Drinfeld
modules, the mass formula is obtained by J. Yu and the author
\cite[Theorem~2.1, p.~906]{yu-yu:ssd} (for any rank and function fields),
based on some earlier works of Gekeler 
\cite{gekeler:finite, gekeler:mass}. 

Another viewpoint to depart
concerning counting superspecial points (still over $\Fpbar$) is
that the superspecial locus itself is an $\ell$-adic Hecke orbit, where
$\ell$ is a prime $\neq p$, in the fine moduli space 
(see Chai \cite{chai:ho} for the definition and results about
$\ell$-adic Hecke orbits). 
It turns out that the similar formalism allows one to calculate
the cardinality of each {\it supersingular} Hecke orbit, 
provided one knows the underlying $p$-divisible group structure 
explicitly. The case of genus
$g=2$ is analyzed in J.-D. Yu and the author 
\cite[Theorem~1.1]{yu-yu:mass_surface}
using the Moret-Bailly family \cite{moret-bailly:p1}. 

Theorem~\ref{11} is not a weighted version. It sits between the
trivial case (a) and the extremely difficult case (b) above. 
It is worth noting that the set $\scrS$ is not always a single
$\ell$-adic Hecke orbit over $\Fp$ (here two abelian varieties over 
$\Fp$ lie in the same $\ell$-adic Hecke
orbit over $\Fp$ if there is an $\ell$-quasi-isogeny from one to 
another over $\Fp$), rather it consists of $g+1$ Hecke orbits in 
some cases (see Proposition~\ref{53}). 
A crucial ingredient which makes our
situation simpler is that the strong approximation holds for the
algebraic group $R_{E/\Q} \SL_{n,E}$ (see Lemma~\ref{51}), 
where $E$ is a number field and $n>1$.  
Note that we impose a condition on $\scrS$ which also makes
the computation simpler. As any superspecial abelian variety $A$ is
isogenous (even isomorphic) to $E_0^g$ over $\Fpbar$, this condition is
un-seen the geometric setting. 
Suppose that one does not impose this isogenous condition, and let
$\scrS'$ be the set of isomorphism classes of $g$-dimensional 
superspecial abelian varieties over $\Fp$. 
In the case $g=1$ Theorem~\ref{11} gives the size of $\scrS'$ for
$p>3$, as there is only one isogeny class in $\scrS'$. 
It is not hard to treat the cases $p=2$ and $3$ separately, and we get
\begin{equation}
  \label{eq:12}
  |\scrS'|=
  \begin{cases}
    2 h(\sqrt{-1})+h(\sqrt{-2}), & \text{if $p=2$}, \\
    4 h(\sqrt{-3}),  & \text{if $p=3$}.
  \end{cases}
\end{equation}
For the genus $g>1$, the
set $\scrS'$ is classified by the cohomology 
\begin{equation}
  \label{eq:13}
 H^1(\Gal(\Fpbar/\Fp), \Aut_{\Fpbar}(E_0^g)),  
\end{equation}
due to the fact (a). This is a single example
of $k$-forms of quasi-projective algebraic varieties over a perfect
field $k$, and this problem may not be very interesting, 
unless one finds some structure on (\ref{eq:13}) which allows us to
compute it more effectively.    

The proof of Theorem~\ref{11}
uses the classification of modules over certain non-maximal order $R$
of a number field $E$. The classification is of
interest on its own right; on the other hand, that is also useful to
determine isomorphism classes in other isogeny classes over $\Fp$ (see
Theorem~\ref{41}).   

Another closely related problem studied is about the field of
definition of isomorphism classes over $\Fpbar$. Deuring showed that
the number $h'$ of isomorphism classes of supersingular elliptic
curves over $\Fpbar$ which have a model over $\Fp$ is as follows 
(see Deuring \cite{deuring:jdm50}, also c.f. 
Ibukiyama-Katsura~\cite[Remark 3, p.~42]{ibukiyama-katsura}) 
For simplicity, one has for $p>3$, 
\begin{equation}
  \label{eq:14}
  h'=
  \begin{cases}
    \frac{1}{2} h(\sqrt{-p}), & \text{if $p\equiv 1 \ (\,{\rm mod}\,
    4)$,} \\ 
    h(\sqrt{-p}), & \text{if $p\equiv 7 \ (\,{\rm mod}\, 8)$,} \\ 
    2h(\sqrt{-p}), & \text{if $p\equiv 3 \ (\,{\rm mod}\, 8)$.} \\
  \end{cases}
\end{equation}
The relationship between (\ref{eq:11}) and (\ref{eq:14}) for $g=1$ is
that each isomorphism class over $\Fpbar$ which has a model over
$\Fp$ has exactly two isomorphism classes over $\Fp$. Deuring
\cite{deuring:jdm50} also showed the following equality
\begin{equation}
  \label{eq:15}
  h'=2t-h,
\end{equation}
where $t$ is the type number of the quaternion algebra $B_{p,\infty}$
and $h$ is the class number of $B_{p,\infty}$. Ibukiyama and Katsura
further generalized the result of Deuring to higher genus $g$. 
They showed that the same
relation (\ref{eq:15}) holds for the number 
$H'$ of the objects in $\Lambda_g$ which
have a model over $\Fp$, the type number $T$ of the quaternion
unitary algebraic group $G$ in question, and the class number $H$ of
the group $G$; see \cite[Theorems 1 and 2]{ibukiyama-katsura}. 
However, as mentioned before, this is a relationship we 
could have, and no explicit formula for each term is given 
when $g>2$.   

\begin{thank}
  The author thanks Frans Oort for helpful comments on an earlier
  manuscript. 
  The manuscript is prepared during the
  author's stay at l'Institut des Hautes \'Etudes Scientifiques. 
  He acknowledges the institution for kind hospitality and excellent working
  conditions. The research was partially supported by grants NSC
  97-2115-M-001-015-MY3 and AS-99-CDA-M01.
\end{thank}

\section{Preliminaries}
\label{sec:02}

Let the set $\scrS$ and the elliptic curve $E_0$ be as in \S
~\ref{sec:01}. Let $\calG$ be the
Galois group $\Gal(\Fpbar/\Fp)$. We define a set $\Phi_v$ for each
prime $v$ of $\Q$. If $\ell$ is a prime $\neq p$, let
$\Phi_\ell$ be the set 
of isomorphism classes of Tate modules $T_\ell(A)$ as 
$\Z_\ell[\calG]$-modules for
all $A\in \scrS$. Let $\Phi_p$ be the set of isomorphism classes of
\dieu modules $M(A)$ for all $A\in \scrS$. In this paper we use 
covariant \dieu modules. We refer the reader to Demazure 
\cite{demazure} and
Manin \cite{manin:thesis} for a basic account of \dieu theory. 

Let $M$ be a \dieu module over a perfect field $k$ of \ch $p$. We
recall that the {\it $a$-number} of $M$, denoted by $a(M)$, is the
dimension 
of the $k$-vector space $M/(F,V)M$. The $a$-number of an abelian
variety over $k$, denoted by $a(A)$, is defined to be the $a$-number
$a(M(A))$ of the \dieu module $M(A)$ associated to $A$. 

\begin{thm}\label{21}
  Let $A$ be an abelian variety of dimension $g$ over an \ac field 
  of \ch $p$. If $a(A)=g$, then $A$ is superspecial. 
\end{thm}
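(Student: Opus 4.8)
The plan is to translate everything into covariant \dieu modules, reduce the hypothesis on the $a$-number to the single identity $FM=VM$, identify this as the condition cutting out the superspecial $p$-divisible group, and finally upgrade the resulting isomorphism of $p$-divisible groups to an isomorphism of \abs. Write $M=M(A)$, a free module of rank $2g$ over $W=W(\bar k)$, and recall $\dim_k M/FM=\dim_k M/VM=g$, since these compute the dimensions of $\Lie A$ and of $\Lie A^t$. Because $(F,V)M=FM+VM$, the space $M/(FM+VM)$ is a common quotient of the two $g$-dimensional spaces $M/FM$ and $M/VM$. The hypothesis $a(A)=\dim_k M/(FM+VM)=g$ therefore forces both surjections $M/FM\to M/(FM+VM)$ and $M/VM\to M/(FM+VM)$ to be isomorphisms; reading off their kernels gives $VM\subseteq FM$ and $FM\subseteq VM$, that is, $FM=VM$.

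First I would extract the global geometry of the $p$-divisible group from $FM=VM$. One gets $F^2M=FVM=pM$ and likewise $V^2M=pM$, so on the isocrystal $M[1/p]$ the operator $F^2$ differs from $p$ by a unit; hence every Newton slope equals $1/2$ and $A$ is \ss. Reducing modulo $p$, the nilpotent semilinear operators induced by $F$ and $V$ on $M/pM$ have common image equal to common kernel, of dimension $g$, which is exactly the superspecial configuration. One concludes $M\cong M(\mathbb E)^{\oplus g}$, where $\mathbb E=E_0[p^\infty]$ is the $p$-divisible group of a \ss elliptic curve $E_0$ (the rank-two module with $F=V$ and $F^2=p$); equivalently $A[p^\infty]\cong\mathbb E^{g}$. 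This is the standard characterization of superspecial \dieu modules, the point being that among all lattices in the slope-$1/2$ isocrystal the condition $FM=VM$ singles out the superspecial one.

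It remains to pass from the $p$-divisible group to $A$ itself. For $g=1$ this is immediate, since $a(A)=1$ already forces $A$ to be a \ss elliptic curve, hence superspecial. For $g\ge2$, fix a \ss elliptic curve $E_0$ over $\bar k$ with $\mathcal O=\End(E_0)$ a maximal order in $B_{p,\infty}$, and consider $T=\Hom(E_0,A)$, a projective right $\mathcal O$-module of rank $g$ (finitely generated and torsion-free over the maximal order $\mathcal O$, of the correct rank because $A$ is isogenous to $E_0^{g}$). The evaluation morphism $T\otimes_{\mathcal O}E_0\to A$ is an isogeny, and I would check it is an isomorphism one prime at a time: at $\ell\neq p$ on Tate modules, and at $p$ on \dieu modules, where the identity $FM=VM$ from the first step is precisely what forces bijectivity. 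Since over $\bar k$ every \ss elliptic curve is isomorphic to $E_0$, the source is isomorphic to $E_0^{g}$, and we obtain $A\cong E_0^{g}$, a product of \ss elliptic curves.

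The main obstacle is this last step. The $a$-number sees only $A[p^\infty]$, so $FM=VM$ guarantees a priori nothing more than that the $p$-divisible group is superspecial; manufacturing a genuine isomorphism of \abs requires the global evaluation argument above (or, alternatively, the Deligne--Ogus--Shioda uniqueness of the superspecial \ab in dimension $\ge2$ together with an existence input). The delicate point is verifying that the evaluation isogeny is an isomorphism at $p$ --- that nothing is lost between $M$ and $M(\mathbb E)^{g}$ --- and it is exactly here that the maximality $a(A)=g$ is indispensable.
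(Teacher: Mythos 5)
First, note what you are comparing against: the paper does not prove this statement at all --- its ``proof'' is a citation to Theorem 2 of Oort \cite{oort:product}. So your proposal is an attempt to reprove Oort's theorem from scratch, and it must be judged as such. Your first step is correct and cleanly done: since $\dim_k M/FM=\dim_k M/VM=g$, the hypothesis $a(A)=g$ forces $FM=VM$, hence $F^2M=pM$ and all Newton slopes equal $1/2$. After that, however, there are two genuine gaps, and they sit exactly where the content of Oort's theorem lies. The first is the unjustified passage from ``all slopes are $1/2$'' to ``$A$ is isogenous to $E_0^g$'', which is what your third step needs in order to know that $T=\Hom(E_0,A)$ has rank $g$ over $\mathcal O$ and that the evaluation map is an isogeny. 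These are two different notions of supersingularity, and their equivalence is a theorem, not a definition: over $\Fpbar$ it requires Tate's isogeny theorem \cite{tate:eav} together with Honda--Tate, and over an arbitrary algebraically closed field --- which is the actual hypothesis of the statement --- it is itself a nontrivial result of Oort (Theorem 4.2 of ``Subvarieties of moduli spaces'', Invent.\ Math.\ 24 (1974)), whose proof requires descending $A$, up to isogeny, to a finite field. No information about $A[p^\infty]$ alone can produce this global isogeny; your step 2 ends precisely at $A[p^\infty]\cong\mathbb{E}^g$, and the gap between that and an isogeny of abelian varieties is the whole difficulty. (Your fallback via Deligne--Ogus--Shioda has the same problem: to apply uniqueness of the superspecial abelian variety to $A$ you must already know $A$ is a product of supersingular elliptic curves, which is what is being proved.)

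The second gap is the claim that the evaluation isogeny $T\otimes_{\mathcal O}E_0\to A$ is an isomorphism because ``$FM=VM$ is precisely what forces bijectivity''. As stated this is not an argument: knowing that source and target have abstractly isomorphic \dieu modules (both $\cong M(\mathbb{E})^{\oplus g}$) does not make a given injective map between them surjective --- multiplication by $p$ on $M(\mathbb{E})^{\oplus g}$ is injective, has abstractly isomorphic source and target, and is not an isomorphism. What actually closes this step is the identification $\Hom(E_0,A)\otimes\Z_p\simeq\Hom(M(E_0),M(A))$ of Tate--Waterhouse \cite{waterhouse:thesis} (and its $\ell$-adic analogue \cite{tate:eav} for the check at $\ell\neq p$), after which a short Morita-type computation finishes; but those theorems are available only over finite fields (hence over $\Fpbar$ by a limit argument), so this loops back to the first gap: you must first show $A$ descends, up to isogeny, to $\Fpbar$. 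You flag this yourself as ``the main obstacle'' and ``the delicate point'', but naming a gap does not fill it. A similar, milder remark applies to the unproved ``standard characterization'' $FM=VM\Rightarrow M\cong M(\mathbb{E})^{\oplus g}$; that one is genuinely standard and citable (it follows because $M^{p^{-1}F^2=1}$ is a free module over the maximal order of the quaternion division algebra over $\Q_p$). Oort's own proof avoids the evaluation map entirely, running instead a minimal-isogeny/$\alpha_p$-quotient analysis; if you want a self-contained proof along your lines, you must first prove or cite the isogeny statement and then execute the prime-by-prime check over a finite field of definition using Tate's theorems.
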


\begin{proof}
This is Theorem 2 of Oort \cite{oort:product}.   
\end{proof}

\begin{lemma}\label{22}
  Let $A$ be a $g$-dimensional abelian variety over $\Fp$. Then $A$ is 
  isogenous to $E_0^g$ over $\Fp$ if and only if the relative
  Frobenius endomorphism $\pi_A$ of $A$ over $\Fp$ 
  satisfies $\pi_A^2+p=0$. Furthermore, in this case the abelian
  variety $A$ is superspecial.
\end{lemma}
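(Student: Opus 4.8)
The plan is to treat the biconditional and the final superspecial assertion separately. The engine for the equivalence is Tate's isogeny theorem \cite{tate:eav}: two abelian varieties over $\Fp$ are isogenous over $\Fp$ if and only if their Frobenius endomorphisms have the same characteristic polynomial. First I would record that the hypothesis $\pi_{E_0}^2=-p$ forces the characteristic polynomial of $\pi_{E_0}$ to be $T^2+p$: it is monic of degree $2$ and divisible by the minimal polynomial $T^2+p$, hence equals it. Consequently $\pi_{E_0^g}=\diag(\pi_{E_0},\dots,\pi_{E_0})$ has characteristic polynomial $(T^2+p)^g$. Thus, via \cite{tate:eav}, the condition ``$A$ isogenous to $E_0^g$ over $\Fp$'' is equivalent to ``the characteristic polynomial of $\pi_A$ equals $(T^2+p)^g$,'' and the task reduces to matching this with the single relation $\pi_A^2+p=0$.

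For the two implications I would pass through the minimal polynomial. If $\pi_A^2+p=0$, then the minimal polynomial of $\pi_A$ over $\Q$ divides $T^2+p$; since $-p$ is not a square in $\Q$ this polynomial is irreducible, and $\pi_A$ cannot be a rational scalar (a scalar would have negative square), so the minimal polynomial is exactly $T^2+p$. As the characteristic polynomial has degree $2g$ and the same irreducible factors as the minimal polynomial, it must be $(T^2+p)^g$, giving the isogeny. Conversely, if the characteristic polynomial is $(T^2+p)^g$, I would invoke the semisimplicity of the Frobenius endomorphism (Tate \cite{tate:eav}): semisimplicity forces the minimal polynomial to be squarefree, hence equal to the radical $T^2+p$ of the characteristic polynomial, which is precisely $\pi_A^2+p=0$.

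For the final claim that such $A$ is superspecial, I would pass to the covariant Dieudonn\'e module $M=M(A)$, which is free over $W(\Fp)=\Z_p$ and exploits that over the \emph{prime} field the relative Frobenius acts as the Dieudonn\'e operator $F$, now $\Z_p$-linear since $\sigma=\id$, with the standard relations $FV=VF=p$. The relation $\pi_A^2=-p$ then reads $F^2=-p=-FV$, i.e.\ $F(F+V)=0$ on $M$; since $F$ is injective on the torsion-free module $M$ (because $VF=p\neq 0$), this yields $F+V=0$, so $V=-F$ and hence $(F,V)M=FM$. Because $pM=VFM\subseteq FM$, the quotient $M/FM$ is an $\Fp$-vector space, and its length is $g$ since $\deg\pi_A=p^{\dim A}=p^g$; therefore $a(A)=\dim_{\Fp} M/(F,V)M=g$. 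Applying \thmref{21} to $A_{\Fpbar}$—the $a$-number being insensitive to base change—shows $A$ is superspecial.

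The polynomial bookkeeping of the first two paragraphs is routine, as is the reduction via \cite{tate:eav}. The one place that demands care is the Dieudonn\'e translation in the last paragraph: pinning down that over $\Fp$ the Frobenius endomorphism really is the linear operator $F$ (the argument is symmetric in the opposite convention, where it is $V$, and gives the same $F+V=0$), and verifying $\length(M/FM)=g$. I expect this convention-tracking, rather than any deep difficulty, to be the main obstacle.
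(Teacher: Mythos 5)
Your proposal is correct and follows essentially the same route as the paper: Tate's isogeny theorem plus semisimplicity of the Galois action on $V_\ell(A)$ to identify the isogeny condition with $\pi_A^2+p=0$, and then the Dieudonn\'e-module computation ($F^2=-p$ forces $FM=VM$, hence $a(A)=g$) combined with Oort's Theorem~\parref{21} for superspeciality. You merely supply details the paper leaves implicit (the easy direction of the polynomial equivalence, injectivity of $F$, the length count for $M/FM$, and the base change to $\Fpbar$), which is fine.
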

\begin{proof}
  Using a theorem of Tate \cite[Theorem 1 (c), p.~139]{tate:eav}, 
  $A$ is isogenous to $E_0^g$ over $\Fp$
  if and only if the characteristic polynomial of the endomorphism
  $\pi_A$ is equal to that of $\pi_{E_0^g}$, which is $(X^2+p)^g$. On
  the other hand, since the Tate space $V_\ell(A):=T_\ell(A)\otimes
  \Q_\ell$ is semi-simple as $\Q_\ell[\calG]$-modules, 
  that the characteristic polynomial of $\pi_A$ 
  equal to $(X^2+p)^g$ implies that the minimal polynomial of $\pi_A$ is
  $X^2+p$. This shows the first statement. 

For the second statement, we use Theorem~\ref{21}. Since $F^2=-p$ on
  $M(A)$, we get $VM(A)=FM(A)$ and hence $a(A)=g$. This proves the
  lemma \qed 
\end{proof}

Put $R:=\Z[X]/(X^2+p)=\Z[\pi]$ and $E:=R\otimes \Q=\Q(\sqrt{-p})$. Let
$O_E$ 
be the ring of integers of $E$. 
For each finite place $v$ of $\Q$, write $R_v$, $E_v$ and $O_{E_v}$
for $R\otimes_{\Z} \Z_v$, $E\otimes_{\Q} \Q_v$, and $O_E \otimes_{\Z}
\Z_v$, respectively.

Let $A$ be an object in $\scrS$. 
Let $\sigma_p\in \calG$ be the arithmetic
Frobenius automorphism $x\mapsto x^p$.
We have $\sigma_p x=\pi_A x$ for all $x\in T_\ell(A)$, where $\ell$ is 
any prime $\neq p$.  
Since $\pi_A^2+p=0$, the action of $\calG$ on $T_\ell(A)$ factors
through the epi-morphism $\Z_\ell[\calG]\to \Z_\ell[X]/(X^2+p)$. 
Therefore, we may classify Tate modules $T_\ell(A)$ as
$R_\ell$-modules. 
On the other hand the Tate space $V_\ell(A)$ is a free
$E_\ell$-module of rank $g$; this follows from the fact that 
$\tr(a; V_\ell(A))= g \tr (a; E_\ell)$ for all $a\in R$. 
From this, if $R_\ell$
is the maximal order of $E_\ell$, then $T_\ell(A)$ is a free
$R_\ell$-module of rank $g$. 
In this case the set $\Phi_\ell$ consists of single element. 

In the case $v=p$, the \dieu module $M(A)$ is a
free $\Z_p$-module of rank $2g$, together with a $\Zp$-linear operator $F$
satisfying $F^2+p$, and hence $M(A)$ is simply a $\Z_p$-free finite 
$R_p$-module. Since
$R_p\otimes \Q_p=\Q_p(\sqrt{-p})=E_p$ is a field and $R_p$ is always 
the maximal
order of $E_p$, the \dieu module $M(A)$ is $R_p$-free of rank
$g$. This shows that the set $\Phi_p$ also consists of single element. 

As an elementary result, 
the ring $R_v$ is not the maximal order if and only if 
$v=2$ and $p\equiv 3 \ (\,{\rm mod}\, 4)$. We conclude

\begin{lemma}\label{23}
  The set $\Phi_v$ consists of single element except when $v=2$ and
  $p\equiv 3 \ (\,{\rm mod}\, 4)$. 
\end{lemma}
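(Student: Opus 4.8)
The plan is to reduce the statement to the structure of $R_v$-lattices and then to determine exactly the finite set of places at which $R_v$ fails to be maximal. The discussion preceding the lemma has already established the two relevant freeness facts: at $v=p$ the ring $R_p$ is the maximal order of the field $E_p=\Q_p(\sqrt{-p})$, so each $M(A)$ is $R_p$-free of rank $g$ and $\Phi_p$ is a singleton; and at $\ell\neq p$, whenever $R_\ell$ is maximal the lattice $T_\ell(A)\subset V_\ell(A)\cong E_\ell^{\,g}$ is $R_\ell$-free of rank $g$, so $\Phi_\ell$ is again a singleton. Thus the only thing left is to pin down the places $v$ where $R_v$ is non-maximal.

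First I would isolate the freeness mechanism in the maximal case. If $R_v=O_{E_v}$, then $E_v=E\otimes_\Q\Q_v$ is a finite product of local fields and $O_{E_v}=\prod_{\grl\mid v}O_{E_\grl}$ is a product of discrete valuation rings. An $R_v$-lattice $L$ in the free $E_v$-module $E_v^{\,g}$ splits as $\prod_\grl L_\grl$ with each $L_\grl$ a lattice in $E_\grl^{\,g}$; since each $O_{E_\grl}$ is a PID, every $L_\grl$ is free of rank $g$, whence $L$ is $O_{E_v}$-free of rank $g$. Consequently, at any such $v$ all the modules attached to objects of $\scrS$ are isomorphic and $\Phi_v$ has a single element; recall that for $\ell\neq p$ the $\Z_\ell[\calG]$-action on $T_\ell(A)$ factors through $R_\ell$, so isomorphism classes as $\Z_\ell[\calG]$-modules and as $R_\ell$-modules coincide.

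Next I would carry out the conductor computation. The order $R=\Z[\sqrt{-p}]$ has discriminant $-4p$, while $\disc(O_E)=-p$ when $p\equiv 3\ (\,\mathrm{mod}\ 4)$ and $\disc(O_E)=-4p$ when $p=2$ or $p\equiv 1\ (\,\mathrm{mod}\ 4)$. From $|\disc(R)|=[O_E:R]^2\,|\disc(O_E)|$ one reads off $[O_E:R]=2$ in the first case and $[O_E:R]=1$ in the second, so the conductor of $R$ in $O_E$ is $(2)$ exactly when $p\equiv 3\ (\,\mathrm{mod}\ 4)$ and is $(1)$ otherwise. Localizing, $R_v$ is maximal at every $v$ not dividing the conductor, so $R_v\neq O_{E_v}$ if and only if $v=2$ and $p\equiv 3\ (\,\mathrm{mod}\ 4)$. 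Since this forces $p$ odd, the exceptional place $v=2$ is distinct from $v=p$, consistent with $R_p$ always being maximal. Combining this with the freeness argument yields that $\Phi_v$ is a singleton for every place $v$ outside the stated exception.

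The step I would treat most carefully is the implication \emph{$R_v$ maximal $\Rightarrow$ all lattices isomorphic}: it depends squarely on the product-of-DVRs structure and genuinely fails once $R_v$ is non-maximal, where lattices need no longer be free and $\Phi_v$ may split into several classes. This is precisely why the case $v=2$, $p\equiv 3\ (\,\mathrm{mod}\ 4)$ is excluded here and handled separately afterwards.
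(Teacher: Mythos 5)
Your proof is correct and takes essentially the same route as the paper: the paper's own argument (the discussion preceding the lemma) likewise classifies $T_\ell(A)$ and $M(A)$ as $R_v$-modules, observes that lattices over the maximal order are free of rank $g$, and invokes as ``an elementary result'' that $R_v$ fails to be maximal exactly when $v=2$ and $p\equiv 3\ (\,{\rm mod}\, 4)$. Your discriminant/conductor computation and the product-of-DVRs freeness argument merely supply details the paper leaves implicit.
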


\section{Classification of $\Phi_2$}
\label{sec:03}
In this section we assume that $p\equiv 3 \ (\,{\rm mod}\, 4)$. 
To simplify the 
notation, we write $R:=\Z_2[X]/(X^2+p)=\Z_2[\pi]$, 
$E:=R\otimes_{\Z_2} \Q_2$ and $O_E$
for the ring of integers of $E$. We have 
\[ O_E=\Z_2[\alpha]=\Z_2[X]/(X^2+X+(p+1)/4). \]
Put $\omega:=\pi-1$, and one has 
\[ R=\Z_\ell[\omega]=\Z_2[X]/(X^2+2X+(p+1)\,)\quad \text{and}
\quad 2\alpha=\omega.  \] 
We shall classify $R$-modules $M$ which is finite and free as
$\Z_2$-modules. We divide the classification into two cases:\\

{\bf Case (a):} $p\equiv 3 \ (\,{\rm mod}\, 8)$. In this case, $E$ is
a unramified 
quadratic extension of $\Q_2$. We have (at least) 
two indecomposable $\Z_2$-free finite $R$-modules: 
$R$ and $O_E$ as $R$-modules. The $R$-module structure of $O_E$ is
given as follows: write $O_E=<1,\alpha>_{\Z_2}$, then 
\begin{equation}
  \label{eq:31}
  \omega 1= 2\alpha\quad\text{and} \quad 
\omega \alpha= -2\alpha - (p+1)/2. 
\end{equation}
If $M=R^r\oplus O_E^s$, then the non-negative integers $r$ and $s$ are
uniquely determined by $M$. 
Indeed, we have $r+s=\dim_{E} M\otimes_{\Z_2} \Q_2$, 
and $M/(2,\omega)M=(\F_2)^r\otimes (\F_2\oplus \F_2)^s. $  \\

{\bf Case (b):} $p\equiv 7 \ (\,{\rm mod}\, 8)$. In this case,
$E=\Q_2\times 
\Q_2$. Write 
\[ X^2+X+(p+1)/4=(X-\alpha_1)(X-\alpha_2), \]
where
$\alpha_1, \alpha_2\in \Z_2$. By switching the order, we may assume
that $\alpha_1$ is a unit and $\alpha_2\in 2\Z_2$. We have the isomorphisms
\[ O_E=\Z_2[\alpha] \simeq O_E/(\alpha-\alpha_1)\times
O_E/(\alpha-\alpha_2)\simeq \Z_2\times \Z_2. \] 
Therefore, 
\[ X^2+2X+(p+1)=(X-2\alpha_1)(X-2\alpha_2). \]
We have (at least) three indecomposable $\Z_2$-free finite
$R$-modules: 
\[ R, \quad R/(\omega-2\alpha_1),\quad\text{and}\quad R/(\omega-2\alpha_2).\] 
Among them, we have 
\[ O_E\simeq R/(\omega-2\alpha_1)\oplus R/(\omega-2\alpha_2) \]
as $R$-modules. 
If $M=R^r\oplus [R/(\omega-2\alpha_1)]^s\oplus
[R/(\omega-2\alpha_2)]^t$, then the non-negative integers $r$, $s$ and
$t$ are uniquely determined by $M$. Indeed, we have 
\[ \rank_{\Z_2} M=2r+s+t, \quad 
M/(2,\omega)M=\F_2^r\oplus \F_2^s\oplus \F_2^t, \]
and  
\[ M/(\omega-2\alpha_1)M=[R/(\omega-2\alpha_1)]^{r+s}\oplus (\F_2)^t. \]\


Conversely, we show that the indecomposable
finite $R$-modules described in
Cases (a) and (b) exhaust all possibilities.
  
\begin{thm}\label{31} 
Let $M$ be a $\Z_2$-free finite $R$-module. Then \
\begin{enumerate}
  \item {\rm Case (a)}. The $R$-module $M$ is isomorphic to
    $R^r\otimes O_E^s$ 
    for some non-negative integers $r$ and $s$. Moreover, the integers 
    $r$ and $s$ are uniquely determined by $M$. 
  \item {\rm Case (b)}. The $R$-module $M$ is isomorphic to 
\[ R^r\oplus \left [ R/(\omega-2\alpha_1)\right ]^s\oplus
\left [ R/(\omega-2\alpha_2)\right ]^t\]
    for some non-negative integers $r$, $s$ and $t$. Moreover, the
    integers 
    $r$, $s$ and $t$ are uniquely determined by $M$. 
\end{enumerate}
\end{thm}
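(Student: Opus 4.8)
The plan is to reduce the classification to a problem of linear algebra over the residue field by means of the conductor of $R$ in $O_E$, and then to settle existence in the two cases by different normal-form arguments; uniqueness of $r,s$ (resp. $r,s,t$) has already been verified in the discussion preceding the statement, so only the decomposition itself is at issue.

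First I would record the structural fact that the conductor of $R$ in $O_E$ is $\grf=2O_E$: a direct computation with the basis $1,\alpha$ shows $R=\Z_2+2O_E$ and $2O_E=\{x\in O_E: xO_E\subseteq R\}$. For a $\Z_2$-free finite $R$-module $M$, set $V:=M\otimes_{\Z_2}\Q_2$ and let $L:=O_E M\subseteq V$ be the $O_E$-lattice it generates. Since $\grf$ is an $O_E$-ideal contained in $R$ one has $\grf L=\grf M$, giving the sandwich $\grf L\subseteq M\subseteq L$. Writing $\ol L:=L/\grf L$ and $W:=M/\grf L\subseteq\ol L$, I would then set up a bijection between isomorphism classes of such $M$ and pairs $(L,W)$, where $L$ ranges over $O_E$-lattices and $W\subseteq\ol L$ is an $R/\grf$-submodule generating $\ol L$ over $O_E/\grf$ — the generation condition being exactly $O_E M=L$ — taken up to the action of $\Aut_{O_E}(L)$ on $\ol L$. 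The point is that an $R$-isomorphism $M\to M'$ extends over $V$ to an $O_E$-isomorphism $L\to L'$ carrying $W$ to $W'$, and conversely $M$ is recovered as the preimage of $W$. Because $O_E$ is a product of discrete valuation rings, every $L$ is free over each factor, so the problem becomes one about $\F_2$-subspaces of $\ol L$ modulo $\GL$ over the residue ring.

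In \textbf{Case (a)} we have $O_E/\grf\cong\F_4$ and $R/\grf\cong\F_2$, so $\ol L\cong\F_4^{\,n}$ and $W$ is an $\F_2$-subspace with $\F_4 W=\ol L$. Let $\zeta$ generate $\F_4$ over $\F_2$ and put $U:=W\cap\zeta W$; using $\zeta^2=\zeta+1$ in characteristic $2$ I would check that $U$ is the largest $\F_4$-subspace contained in $W$ and that $W+\zeta W=\ol L$. Passing to $\ol L/U$, the image $\ol W$ still generates but contains no nonzero $\F_4$-line, which forces $\dim_{\F_2}\ol W=\dim_{\F_4}(\ol L/U)$; thus $\ol W$ is an $\F_2$-form of $\ol L/U$. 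The decisive step is to trivialize this form, i.e. to find an $\F_4$-basis in which $\ol W$ is the $\F_2$-span of the basis vectors: this is the vanishing of $H^1(\Gal(\F_4/\F_2),\GL_m(\F_4))$ (equivalently the normal basis theorem). Lifting such a basis and adjoining an $\F_4$-basis of $U$ yields an $\F_4$-basis of $\ol L$ in which $W=\F_4^{\,s}\oplus\F_2^{\,r}$, that is $M\cong O_E^{\,s}\oplus R^{\,r}$. This descent is the main obstacle of the argument.

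In \textbf{Case (b)} we have $O_E/\grf\cong\F_2\times\F_2$ and $R/\grf\cong\F_2$ embedded diagonally, so $\ol L\cong\F_2^{\,d_1}\oplus\F_2^{\,d_2}$ and the generation condition says precisely that $W$ surjects onto each factor. Here only elementary linear algebra is needed: setting $W_1:=W\cap(\F_2^{\,d_1}\oplus 0)$ and $W_2:=W\cap(0\oplus\F_2^{\,d_2})$, the projections induce an isomorphism $\phi$ between $\F_2^{\,d_1}/W_1$ and $\F_2^{\,d_2}/W_2$, whose graph accounts for $W/(W_1\oplus W_2)$. Choosing bases adapted to $W_1$, $W_2$, and $\phi$ puts $W$, up to $\GL_{d_1}(\F_2)\times\GL_{d_2}(\F_2)$, into a direct sum of the three atoms $a$ (in the first factor), $b$ (in the second), and $a+b$ (diagonal), with surjectivity onto both factors excluding the two stray atoms with $W$-component zero. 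Tracing these back through the bijection, the diagonal atom is $R$, the first is $R/(\omega-2\alpha_1)$ and the second is $R/(\omega-2\alpha_2)$, which gives $M\cong R^r\oplus[R/(\omega-2\alpha_1)]^s\oplus[R/(\omega-2\alpha_2)]^t$ and completes the proof.
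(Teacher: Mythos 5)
Your proposal is correct, and it takes a genuinely different route from the paper's own proof. The paper argues by hand, with a separate trick in each case: in Case (a) it splits off copies of $O_E$ one at a time (choosing $x\in M$ whose reduction mod $2$ lies in the semisimple part of $M/2M$, checking $\omega x/2\in M$, and inducting on the $\Z_2$-free quotient $M/M_1$), while in Case (b) it decomposes $M$ via the kernels $\Ker(\omega-2\alpha_i)$, using an identity expressing $2$ as a combination of $\omega-2\alpha_1$ and $\omega-2\alpha_2$, and then lifts bases to produce $F_0\oplus F_1\oplus F_2$ directly. You instead run a conductor-square argument: since the conductor of $R$ in $O_E$ is $2O_E$, every $\Z_2$-free finite $R$-module is sandwiched as $2L\subseteq M\subseteq L$ with $L=O_EM$, and isomorphism classes of $M$ biject with $R/2O_E$-subspaces $W\subseteq L/2L$ generating over $O_E/2O_E$, taken up to $\Aut_{O_E}(L)$ (which surjects onto the relevant general linear group of the residue ring, so the problem really is residue-field linear algebra). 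Both cases then become uniform: Galois-descent linear algebra over $\F_2\subset\F_4$ in Case (a), and Goursat's lemma for a subspace of $\F_2^{d_1}\oplus\F_2^{d_2}$ surjecting onto both factors in Case (b), with the three atoms matching $R$, $R/(\omega-2\alpha_1)$, $R/(\omega-2\alpha_2)$ exactly as you say. All the steps you flag as "to be checked" are true and routine; in fact your "decisive step" in Case (a) is even easier than an appeal to $H^1(\Gal(\F_4/\F_2),\GL_m(\F_4))=1$, since once you know $\ol W\cap\zeta\ol W=0$ and $\ol W+\zeta\ol W=\ol L/U$, any $\F_2$-basis of $\ol W$ is automatically an $\F_4$-basis of $\ol L/U$, so no descent theorem is needed. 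What your approach buys is uniformity and conceptual clarity: it is the standard technique for modules over an order whose conductor is the radical of the maximal order, it treats the two cases as instances of one reduction, and it cleanly separates the integral step (Nakayama plus lifting bases along $\GL_n(O_E)\to\GL_n(O_E/2O_E)$) from finite combinatorics. What the paper's proof buys is self-containedness: it constructs the direct-sum decomposition by explicit elements, with no lattice-pair formalism to set up. Your deferral of the uniqueness statements to the discussion preceding the theorem also matches the paper, whose proof likewise begins by noting that uniqueness was already shown.
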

\begin{proof} The unique determination of integers $r$, $s$ and $t$
    has been showed. We prove the first part of each statement. 

  (1) Let 
  \begin{equation}
    \label{eq:32}
    \ol M:=M/2M=(\F_2[\omega]/\omega^2)^r\oplus (\F_2)^{2s}
  \end{equation}
be the decomposition as $R/2R=\F_2[\omega]/\omega^2$-modules. We
first show that if $s=0$, then $M\simeq R^r$. Since $r=\dim
M\otimes_{R} \F_2=\dim_E M\otimes_{R} E$ and $R$ is a local Noetherian
domain, the module $M$ is free. 

Now suppose $s>0$. Choose an element $\bar a \neq 0 \in (\F_2)^{2s}$
and let 
$x\in M$ be an element such that $\bar x=\bar a$. As $\ol{\omega
  x}=0$, the element $\omega x/2\in M$. Put $M_1:=<x,\omega
x/2>_{\Z_2}$; it is an $R$-module and is isomorphic to $O_E$. Let
$\omega'$ be the conjugate of $\omega$, one has $\omega'=-2-\omega$
and $\omega \omega'=(1+p)$. Note that $(1+p)/4$ is a unit. Since $\bar
x\not\in \omega' \ol M$, one has $x\not\in \omega' M$. We show that
$\ol{\omega x/2}\neq 0$. Suppose not, then $\omega x=4y$ for some
$y\in M$. Applying $\omega'$, we get $x=\omega' y'$ for some $y'\in
M$, contradiction. Since $x$ and ${\omega x/2}$ are $\Z_2$-linearly 
independent, the $\F_2$-vector space $\ol M_2=<\bar x, \ol{\omega
  x/2}>$ has 
dimension $2$, and hence the quotient $\ol M/\ol M_1$ has dimension
deceased by $2$. On the other hand, the $\Z_2$-rank of $M/M_1$ also
decreases by $2$. This shows that $M/M_1$ is free as $\Z_2$-modules. 
If the integer $s$ in (\ref{eq:32}) for $M/M_1$ is positive, then we
can find an $R$-submodule $M_2=<x_2, \omega x_2/2>_{\Z_2}\simeq O_E$
not contained in the vector space $E M_1$ 
such that $M/(M_1+M_2)$ is free as $\Z_2$-modules. 
Continuing this process, we get $R$-submodules $M_1,\dots, M_{s'}$,
which are isomorphic to $O_E$, such that $M_1+\dots+M_{s'}=M_1\oplus
\dots\oplus M_{s'}$ and $M/(M_1+\dots + M_{s'})$ is a free
$R$-module. It follows that $M\simeq O_E^{s'}\oplus R^{r'}$. Since
$s'$ and $r'$ are uniquely determined by $M$ as before, the integers
$s'$ and $r'$ are actually equal to $s$ and $r$ in (\ref{eq:32}), 
respectively. This proves (1). 

(2) Let 
\[ M_1:=\{x\in M\, |\, (\omega-2\alpha_1)x=0\, \},\]
and 
\[ M_1:=\{x\in M\, |\, (\omega-2\alpha_2)x=0\, \}.\]
Using the relation
\[
2=(\omega-2\alpha_1)(2\alpha_2+1)^{-1}-
(\omega-2\alpha_2)(2\alpha_2+1)^{-1}, \] 
one shows that $2M\subset M_1+M_2$, and hence the quotient
$M/(M_1+M_2)$ is an $\F_2$-vector space, say of dimension $r$.   
Let $x_1, \dots, x_r$ be elements of $M$ such that the images $\bar
x_1, \dots, \bar x_r$ form an $\F_2$-basis for $M/(M_1+M_2)$. Put
$F_0:=<x_1, \dots, x_r>_R$, which is isomorphic to $R^r$, as $\bar
x_i's$ form a basis for $F_0/(M_1+M_2)=F_0/(2,\omega)F_0\simeq
\F_2^r$. Now $(\omega-2\alpha_2)F_0\subset M_1$, we choose elements
$y_1, \dots, y_s$ in $M_1$ so that the images $\bar y_1, \dots, \bar
y_s$ form an $R/(\omega-2\alpha_1)$-basis for
$M_1/(\omega-2\alpha_2)F_0$, and put $F_1=<y_1,\dots, y_s>_R$. We have 
\[ M_1=(\omega-2\alpha_2)F_0\oplus F_1,\quad \text{and} \quad F_0\cap
F_1=0. \]
 Similarly, we have a free $R/(\omega-2\alpha_2)$-submodule 
$F_2$ of $M_2$, of rank $t$, such that
\[ M_2=(\omega-2\alpha_1)F_0\oplus F_2,\quad \text{and} \quad F_0\cap
F_2=0. \]  
We have $(F_0+F_1)\cap F_2=F_0\cap F_2=0$ and $M=F_0+F_1+F_2$, and
hence $M=F_0\oplus F_1\oplus F_2$. This proves (2). \qed  
\end{proof}

We retain the notation as in \S~1 and 2.

\begin{cor}\label{32}
  Assume $p\equiv 3 \ (\,{\rm mod}\, 4)$, then the Tate module
  $T_2(A)$ of an 
  object $A$ in $\scrS$ is isomorphic to $R_2^r\oplus O_{E_2}^s$ for
  some non-negative integers $r$ and $s$ such that $r+s=g$. Moreover,
  the integers $r$ and $s$ are uniquely determined by $T_2(A)$. 
\end{cor}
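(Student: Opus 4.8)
The plan is to apply Theorem~\ref{31} directly to $M=T_2(A)$ and then use the rational structure of the Tate space already recorded in \S~\ref{sec:02} to cut the resulting decomposition down to the asserted shape. First, since $p\equiv 3\pmod 4$ forces $p$ odd, $2$ is a prime different from $p$, so $T_2(A)$ is a free $\Z_2$-module of rank $2g$ on which the Galois action factors through $\sigma_p\mapsto\pi_A$ with $\pi_A^2=-p$. This makes $T_2(A)$ a $\Z_2$-free finite module over $R=R_2=\Z_2[X]/(X^2+p)$, so Theorem~\ref{31} applies verbatim. In Case (a), where $p\equiv 3\pmod 8$, part (1) of Theorem~\ref{31} immediately gives $T_2(A)\simeq R_2^r\oplus O_{E_2}^s$ with $r,s$ uniquely determined, and comparing $\Z_2$-ranks ($2(r+s)=2g$) yields $r+s=g$. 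So the only real work is Case (b).

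In Case (b), where $p\equiv 7\pmod 8$, part (2) of Theorem~\ref{31} gives $T_2(A)\simeq R_2^r\oplus[R_2/(\omega-2\alpha_1)]^s\oplus[R_2/(\omega-2\alpha_2)]^t$ with $r,s,t$ unique. To match the form claimed in the corollary I would invoke the fact established in \S~\ref{sec:02} that the Tate space $V_2(A)=T_2(A)\otimes_{\Z_2}\Q_2$ is free of rank $g$ over $E_2$; this was deduced there for every prime $\ell\neq p$ from the trace identity $\tr(a;V_\ell(A))=g\,\tr(a;E_\ell)$, hence holds for $\ell=2$. Tensoring the decomposition with $\Q_2$ and writing $E_2=\Q_2\times\Q_2$, the summand $R_2$ contributes equally to both factors, while $R_2/(\omega-2\alpha_1)$ and $R_2/(\omega-2\alpha_2)$ each contribute to exactly one factor. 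The $E_2$-freeness of rank $g$ forces both factors to have $\Q_2$-dimension $g$, i.e. $r+s=g=r+t$, whence $s=t$ and $r+s=g$. Finally I would fold the two rank-one pieces back together using the isomorphism $O_{E_2}\simeq R_2/(\omega-2\alpha_1)\oplus R_2/(\omega-2\alpha_2)$ recorded just before Theorem~\ref{31}, obtaining $T_2(A)\simeq R_2^r\oplus O_{E_2}^s$; uniqueness of $r,s$ is then inherited from the uniqueness part of Theorem~\ref{31}.

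The one point that needs care — and where a reader might anticipate a Weil-pairing or duality argument relating $A$ to $A^\vee$ — is the equality $s=t$ in Case (b). The plan deliberately avoids invoking any polarization on $A$ (the objects of $\scrS$ carry none): the equality drops out for free from the $E_2$-freeness of $V_2(A)$, since freeness over the split algebra $\Q_2\times\Q_2$ is precisely the statement that the two factors have equal dimension. Thus the main (and essentially only) obstacle is bookkeeping: keeping straight which indecomposable summand from Theorem~\ref{31} feeds which factor of $E_2=\Q_2\times\Q_2$, so that the dimension count in each factor can be read off correctly.
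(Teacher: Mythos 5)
Your proposal is correct and follows exactly the paper's route: apply Theorem~\ref{31} to $T_2(A)$, and in the split case $p\equiv 7\pmod 8$ use the freeness of $V_2(A)$ over $E_2$ (established in \S~\ref{sec:02}) to force $s=t$, then reassemble the two rank-one summands into $O_{E_2}$. The paper's own proof is just a terser version of this; your expanded dimension count over $\Q_2\times\Q_2$ and the rank bookkeeping are precisely what its one-line argument leaves implicit.
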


\begin{proof}
  Since the Tate space $V_2(A)$ is a free $\Q_2[\alpha]$-module, the
  numbers $s$ and $t$ in Theorem~\ref{31} (2) above are the
  same. Therefore, the corollary follows. \qed 
\end{proof}

\begin{lemma}\label{33}
  Assume $p\equiv 3 \ (\,{\rm mod}\, 4)$. 
  For any non-negative  integers $r$ and $s$ with $r+s=g$, there
  exists an abelian variety $A_r$ in $\scrS$ such that the Tate
  module $T_2(A_r)$ of $A_r$ is isomorphic to $R_2^r\oplus O_{E_2}^s$. 
\end{lemma}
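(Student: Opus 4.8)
The plan is to construct each $A_r$ as a product of supersingular elliptic curves, reducing the statement to the one-dimensional case, and then to realize the two admissible Tate-module types for elliptic curves by changing the lattice at the prime $2$.

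First I would reduce to the case $g=1$. Suppose we can produce two supersingular elliptic curves $E'$ and $E''$ over $\Fp$, each with relative Frobenius satisfying $\pi^2+p=0$, such that $T_2(E')\simeq R_2$ and $T_2(E'')\simeq O_{E_2}$ as $R_2$-modules. Putting $s:=g-r$, set $A_r:=(E')^r\times (E'')^s$. Then $A_r$ is a product of supersingular elliptic curves, hence superspecial; its relative Frobenius acts diagonally and satisfies $\pi_{A_r}^2+p=0$, so by Lemma~\ref{22} it is isogenous to $E_0^g$ over $\Fp$ and lies in $\scrS$. As the Tate module of a product is the direct sum of the Tate modules, $T_2(A_r)\simeq R_2^r\oplus O_{E_2}^s$, which is exactly the required shape. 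It therefore suffices to exhibit $E'$ and $E''$.

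One of the two types is available for free: the given $E_0$ is a supersingular elliptic curve over $\Fp$ with $\pi^2+p=0$, so by Corollary~\ref{32} applied in dimension one, $T_2(E_0)$ is isomorphic either to $R_2$ or to $O_{E_2}$, and this provides one of $E',E''$. To obtain the other type I would change the $2$-adic lattice by a single isogeny. Both $R_2$ and $O_{E_2}$ are $R_2$-modules, hence stable under $\calG=\Gal(\Fpbar/\Fp)$ (which acts through $R_2=\Z_2[\pi]$), and they are commensurable lattices in the $2$-dimensional $\Q_2$-vector space $V_2(E_0)$, with $R_2\subset O_{E_2}$ of index $2$. Since $2\neq p$, every finite $\calG$-stable subgroup of $E_0[2^\infty](\Fpbar)\simeq V_2(E_0)/T_2(E_0)$ is the group of $\Fpbar$-points of a unique \'etale subgroup scheme $C\subset E_0$ over $\Fp$, and $E_0/C$ is then an elliptic curve over $\Fp$ whose $2$-adic Tate module is the enlargement of $T_2(E_0)$ determined by $C$. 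Choosing $C$ of order $2$ appropriately (note that $R_2$, $O_{E_2}$ and $\frac{1}{2}R_2\simeq R_2$ are mutual index-$2$ neighbours in $V_2(E_0)$) produces, from a curve of one type, a curve of the other; this curve is isogenous to $E_0$, hence again supersingular with $\pi^2+p=0$ by Lemma~\ref{22}, and so lies in $\scrS$. This yields both $E'$ and $E''$.

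The step I expect to be the main obstacle is precisely this last realization: that a prescribed $\calG$-stable $2$-adic lattice actually occurs as the Tate module of an elliptic curve over $\Fp$ belonging to $\scrS$. This is where Tate's theorem \cite{tate:eav} on homomorphisms of abelian varieties over finite fields, together with the dictionary between $2$-primary isogenies over $\Fp$ and $\calG$-stable lattice changes in $V_2$, is essential; the explicit description through \'etale subgroup schemes (legitimate since $2\neq p$) turns the construction into a single concrete isogeny, and the only point to be checked by hand---that passing to an isogenous curve preserves membership in $\scrS$---is immediate from Lemma~\ref{22}.
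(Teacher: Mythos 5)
Your proof is correct, but it obtains the two elliptic-curve building blocks by a genuinely different mechanism than the paper. The paper's proof is a one-liner: it invokes Waterhouse \cite[Theorem 4.2 (3), p.~539]{waterhouse:thesis} to choose supersingular elliptic curves over $\Fp$ with endomorphism ring exactly $O_E$ and exactly $R$, and sets $A_r=E_1^r\times E_0^s$; the translation from endomorphism ring to Tate-module type is left implicit (via Tate's isomorphism $\End_{\Fp}(E)\otimes\Z_2\simeq \End_{R_2}(T_2(E))$, which forces $T_2(E)\simeq R_2$ when $\End_{\Fp}(E)=R$ and $T_2(E)\simeq O_{E_2}$ when $\End_{\Fp}(E)=O_E$). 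You use the same product reduction, but instead of citing an existence theorem you manufacture the missing type from the given $E_0$ by a single explicit $2$-isogeny, using the chain $R_2\subset O_{E_2}\subset \tfrac{1}{2}R_2$ of Galois-stable index-$2$ neighbours together with the dictionary between finite Galois-stable subgroups of $E_0[2^\infty](\Fpbar)$ and \'etale subgroup schemes over $\Fp$. Your lattice chain is valid in both the inert case ($p\equiv 3 \ (\,{\rm mod}\, 8)$) and the split case ($p\equiv 7\ (\,{\rm mod}\, 8)$), since in either case $R_2=\Z_2+2O_{E_2}$, so $\tfrac{1}{2}R_2\supset O_{E_2}$ with index $2$ and $\tfrac{1}{2}R_2\simeq R_2$ as $R_2$-modules; and membership of the quotient curve in the isogeny class is indeed immediate from Lemma~\ref{22}. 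What each approach buys: the paper's argument is shorter but rests on Waterhouse's classification of which orders occur as endomorphism rings; yours is self-contained and constructive, and is in effect a rank-one, single-prime instance of the lattice/quasi-isogeny correspondence that the paper only develops later in Theorem~\ref{41} --- given that theorem, your viewpoint yields the lemma instantly, since $R^r\oplus O_E^{g-r}$ is an $R$-lattice in $V=E^g$ and hence corresponds to an abelian variety in the isogeny class of $E_0^g$ with the prescribed Tate module.
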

\begin{proof}
  Choose a supersingular elliptic curve $E_0$ over $\F_p$ such that
  the endomorphism ring $\End_{\Fp}(E_0)$ is equal to $O_E$, and a
  supersingular elliptic curve $E_1$ over $\F_p$ such that 
  the endomorphism ring $\End_{\Fp}(E_0)$ is equal to $R$ (see
  Waterhouse \cite[Theorem 4.2 (3), p.~539]{waterhouse:thesis}). 
  Put $A_r=E_1^r\times E_0^s$, then the superspecial
  abelian variety $A_r$ has the desired property.\qed 
\end{proof}

\section{Abelian varieties over $\Fp$}
\label{sec:04}
\def\qisog{\text{Q-isog}}
\def\isog{\text{Isog}}
For our purpose we need to describe abelian varieties up to
isomorphism over $\Fp$.
The Honda-Tate theory \cite{tate:eav, tate:ht}
has described isogeny classes of abelian varieties over finite
fields. Therefore, we may focus on isomorphism classes in one single
isogeny class over $\Fp$. We describe this in terms of modules so that
we can count them explicitly.
In this section the ground field considered is $\Fp$. 

Let $A_0$ be a fixed abelian variety, and denote by ${\rm Isog}(A_0)$
the set of isomorphism classes in the isogeny class of $A_0$.  
Recall that an {\it quasi-isogeny}
$\varphi:A\to A_0$ is an element $\varphi\in \Hom(A,A_0)\otimes \Q$
such that $n \varphi$ is an isogeny for some integer $n$. 
We identify two quasi-isogenies $\varphi_i:A_i\to A_0$, $i=1,2$ as the
same element if there is an (necessarily unique) isomorphism $\rho:
A_1\to A_2$ such that $\varphi_2\circ \rho=\varphi_1$. Therefore, 
it makes sense to talk about the {\it set} of quasi-isogenies 
to $A_0$, which we denote $\qisog(A_0)$. The set $\qisog(A_0)$ 
can be parametrized by pairs $(H,n)$ 
where $H$ is a subgroup scheme of the dual abelian variety $A_0^t$ and
$n$ is a positive integer. The quasi-isogeny represented by the pair
$(H,n)$ is ``$(1/n)\pi^t$'', 
where $\pi:A_0^t\to A_0^t/H$ is the canonical
homomorphism and $\pi^t: (A^t_0/H)^t\to A_0$ is the dual of $\pi$. 

Let $P(X)\in \Z[X]$ be the minimal polynomial of the relative
Frobenius endomorphism $\pi_0\in \End(A_0)$. 
Put $S=\Z[\pi_0]=\Z[X]/(P(X))$ and $F:=S\otimes_\Z \Q$; it is
a finite-dimensional commutative  semi-simple algebra over $\Q$. 

\begin{thm}\label{41}
There is a finite $F$-module $V$, unique up to isomorphism, such that  
$V\otimes \Q_\ell\simeq V_\ell(A_0)$ as $F_\ell$-modules for all primes
$\ell \neq p$, and $V\otimes \Q_p \simeq M(A_0)\otimes_{\Z_p} \Q_p$ as
$F_p$-modules,
where $M(A_0)$ is the \dieu module of $A_0$. 
Moreover, there is a one-to-one correspondence between 
the set of quasi-isogenies $\varphi:A\to A_0$ and the set of
$S$-lattices in $V$. In this correspondence, isomorphism classes in
the isogeny class of $A_0$ are in bijection with isomorphism classes
in the 
$S$-lattices in $V$. 
\end{thm}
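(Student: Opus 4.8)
The plan is to build a dictionary between the isogeny class of $A_0$ and the $S$-lattices in $V$, with Tate's isogeny theorem and its Dieudonn\'e-module analogue as the engine, and to establish the three assertions in turn.

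First, for the existence and uniqueness of $V$, I would use that $F=S\otimes_\Z\Q$ is a commutative semisimple $\Q$-algebra, hence a product $F=\prod_i F_i$ of number fields indexed by the irreducible factors of the minimal polynomial $P(X)$. A finite $F$-module is then a product $\prod_i F_i^{m_i}$ and is determined up to isomorphism by the tuple $(m_i)$. I would define $m_i$ to be the multiplicity with which $F_i$ occurs in $V_\ell(A_0)$, viewed as an $F_\ell$-module, and the key point to verify is that this multiplicity is independent of $\ell$ and agrees with the one read off from $M(A_0)\otimes_{\Z_p}\Q_p$ at $v=p$. This is where the structure theory of abelian varieties over finite fields enters: writing $A_0$ up to isogeny as a product $\prod_i A_i^{n_i}$ of isotypic pieces, each $V_\ell(A_i)$ is free over the local factor $F_{i,\ell}$ of rank $e_i$, the reduced degree of $\End^0(A_i)$ over $F_i$, so that the $F_i$-component of $V_\ell(A_0)$ has rank $n_i e_i$; since $n_i$ and $e_i$ are global invariants, the multiplicities coincide at every place, including $p$. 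Setting $V:=\prod_i F_i^{n_i e_i}$ then yields a module with $V\otimes\Q_\ell\simeq V_\ell(A_0)$ and $V\otimes\Q_p\simeq M(A_0)\otimes_{\Z_p}\Q_p$, unique by the determination of the $m_i$.

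Second, for the bijection between quasi-isogenies and $S$-lattices, I would define a map $\Theta$ sending a quasi-isogeny $\varphi\colon A\to A_0$ to the lattice obtained by transporting the integral structure $\big((T_\ell(A))_{\ell\neq p}, M(A)\big)$ of $A$ into $V$: the induced isomorphisms $V_\ell(\varphi)\colon V_\ell(A)\isoto V_\ell(A_0)=V\otimes\Q_\ell$ carry $T_\ell(A)$ to a lattice in $V\otimes\Q_\ell$, and likewise $M(A)$ gives a lattice in $V\otimes\Q_p$ via Dieudonn\'e theory; these local lattices glue to a global $S$-lattice $L_\varphi\subset V$ because $T_\ell(A)=T_\ell(A_0)$ for almost all $\ell$ and because Frobenius-equivariance of all the maps makes each local lattice stable under $\pi_0$. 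To invert $\Theta$ I would use the correspondence between finite subgroup schemes of $A_0$ and overlattices of its total integral structure --- for $\ell\neq p$ the classical identification of subgroups of $A_0[\ell^n]$ with lattices between $T_\ell(A_0)$ and $\ell^{-n}T_\ell(A_0)$, and at $p$ its Dieudonn\'e-module version --- together with the scaling integer $n$ from the $(H,n)$-parametrization already recorded for $\qisog(A_0)$. Given an $S$-lattice $L$, one scales and compares with the integral structure of $A_0$ to produce the relevant finite subgroup scheme $H$, forms the quotient abelian variety, and checks that the resulting quasi-isogeny realizes $L$; Tate's theorem (see \cite{tate:eav,waterhouse:thesis}) guarantees that these linear-algebra data descend to genuine abelian varieties and morphisms over $\Fp$.

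Third, for the statement on isomorphism classes, I would show that quasi-isogenies $\varphi_1\colon A_1\to A_0$ and $\varphi_2\colon A_2\to A_0$ satisfy $A_1\simeq A_2$ if and only if $L_{\varphi_1}\simeq L_{\varphi_2}$ as $S$-modules. For the forward direction, an isomorphism $\rho\colon A_1\to A_2$ induces the $F$-linear automorphism $g=V(\varphi_2)\circ V(\rho)\circ V(\varphi_1)^{-1}$ of $V$, which is $S$-linear and sends $L_{\varphi_1}$ onto $L_{\varphi_2}$. Conversely, an $S$-module isomorphism $\psi\colon L_{\varphi_1}\to L_{\varphi_2}$ extends to an $F$-linear automorphism of $V$, and the composite $V(\varphi_2)^{-1}\circ\psi\circ V(\varphi_1)$ is a Frobenius-equivariant isomorphism of rational Tate and Dieudonn\'e modules; by Tate's theorem it is realized by a quasi-isogeny $A_1\to A_2$, and since it carries the integral structure of $A_1$ exactly onto that of $A_2$ at every place, it is an isomorphism of abelian varieties. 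I expect the main obstacle to lie in the inverse construction of the second step at the prime $p$: assembling the local integral data into an honest abelian variety requires the Dieudonn\'e-module analogue of Tate's isogeny theorem and careful bookkeeping of the global gluing, whereas the $\ell\neq p$ part is standard. The place-independence of the multiplicities $m_i$ at $p$ in the first step is of the same flavour and rests on the same body of theory.
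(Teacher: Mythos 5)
Your overall strategy coincides with the paper's proof step for step: the same isotypic decomposition $A_0\sim\prod_i A_i^{n_i}$ with $m_i=n_ie_i$ and Tate's theorem at $\ell\neq p$ plus its Dieudonn\'e analogue at $p$ to build $V$, the same passage to adelic lattices to get the quasi-isogeny/lattice dictionary (your subgroup-scheme inversion is what the paper compresses into ``by a theorem of Tate''), and the same final reduction for isomorphism classes. Steps 1 and 2 are essentially a faithful, slightly more detailed transcription of the paper's argument.

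The gap is in your step 3, precisely where you say more than the paper does. You assert that an isomorphism $\rho\colon A_1\to A_2$ ``induces the $F$-linear automorphism $g=V(\varphi_2)\circ V(\rho)\circ V(\varphi_1)^{-1}$ of $V$''. But there is no rational Tate module: $\rho$ acts only on the $T_\ell$'s and on $M$, so $g$ is a priori only an automorphism of $V\otimes\A_f$, and nothing forces it to preserve the rational subspace $V$. Equivalently, $\End^0(A_0)$ and $\End_F(V)$ are two $\Q$-forms of the adelic algebra $\End_{F\otimes\A_f}(V\otimes\A_f)$, and Tate's theorems identify them place by place, not globally. Your converse direction has the mirror-image defect: Tate's theorem produces, at each finite place, an element of $\Hom(A_1,A_2)\otimes\Q_\ell$ inducing the local component of $V(\varphi_2)^{-1}\circ\psi\circ V(\varphi_1)$, but a quasi-isogeny is a single element of $\Hom(A_1,A_2)\otimes\Q$, and there is no reason your adelically defined map lies in that rational subspace. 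This is not pedantry, because the two $\Q$-forms can genuinely differ (at the archimedean places): take $A_0$ simple over $\Fp$ with $\pi_0^2=p$, a supersingular abelian surface. Then $F=\Q(\sqrt p)$ and $\End^0(A_0)$ is the totally definite quaternion algebra over $F$ split at every finite place; the isomorphism classes in this isogeny class with maximal-order Tate/Dieudonn\'e modules are counted by the class number of that quaternion algebra, which for large $p$ strictly exceeds $h(\Q(\sqrt p))$, the number of isomorphism classes of $O_F$-lattices in $V=F^2$. So the bijection you are proving is false in the stated generality, and the gap cannot be closed without a hypothesis.

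To be fair, the paper's own one-line justification (``the basic fact\ldots'') silently elides exactly the same point, and the repair is what the paper implicitly relies on: when every simple factor $A_i$ has commutative $\End^0(A_i)$ --- as in the application, where $\End^0(E_0)=\Q(\sqrt{-p})$, hence $\End^0(A_0)\simeq M_g(E)=\End_E(V)$ --- one may choose the adelic identification $T(A_0)\otimes\A_f\simeq V\otimes\A_f$ equivariantly for such an algebra isomorphism; then $\End^0(A_0)^\times$ and $\GL_F(V)$ have the same image in the adelic group and both directions of your step 3 become correct. You should state this hypothesis (or that choice) explicitly. A smaller instance of the same issue sits in your step 2 at $p$: an $S_p$-lattice in $M(A_0)\otimes\Q_p$ is automatically a Dieudonn\'e module only when $p/\pi_0\in S$ (true in the paper's case since $p/\pi_0=-\pi_0$); otherwise stability under Verschiebung is an extra condition and your inverse construction does not reach every $S$-lattice.
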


\begin{proof}
  Let the abelian variety $A_0$ be isogenous to $\prod_{i=1}^r
  A_i^{n_i}$, where each abelian variety $A_i$ is simple and $A_i$ is
  not isogenous to $A_j$ for $i\neq j$. Then the endomorphism algebra
  $\End^0(A):=\End(A)\otimes \Q$ is isomorphic to 
  $\prod_{i=1}^r M_{n_i}(D_i)$, where $D_i:=\End^0(A_i)$ is a
  finite-dimensional division algebra over $\Q$. The center of the
  endomorphism algebra $\End^0(A)$ is equal to $F$. If we let $F_i$ be the
  center of $D_i$, then $F=\prod_{i=1}^r F_i$. The field $F_i$ is the
  subfield of $D_i$ generated by the Frobenius endomorphism $\pi_i$ of
  $A_i$ over $\Q$.
  Let $e_i:=[D_i:F_i]^{1/2}$ and $m_i:=n_i e_i$. Then, by a theorem of
  Tate \cite[Main Theorem, p.~134]{tate:eav}
  \[ \End^0(A_i^{n_i})\otimes \Q_\ell=M_{m_i}(F_i\otimes
  \Q_\ell) \simeq \End_{\Q(\pi_i)\otimes \Q_\ell}(V_\ell(A_i^{n_i})). \]
  This shows that the Tate space $V_\ell(A_i^{n_i})$ is a free
  $F_i\otimes \Q_\ell$ of rank $m_i$. It follows from another theorem of
  Tate \cite[Theorem, p.~525]{waterhouse:thesis} that 
\[ \End^0(A_i^{n_i})\otimes \Qp=M_{n_i}(D_i\otimes_\Q \Q_p) 
  \simeq \End_{\Q[\pi_i]\otimes \Q_p}
  (M(A_i^{n_i})\otimes \Qp). \] 
  This shows that the \dieu space $M(A_i^{n_i})\otimes \Q_p$ is a free
  $F_i\otimes \Qp$ of rank $m_i$.
  The integer $m_i$ is independent
  of $\ell$ and $p$. Put $V:=\oplus F_i^{m_i}$ as a finite $F$-module,
  and then $V$ has the desired property. 

  For any abelian variety $A$, we write $T(A):=M(A)\times
  \prod_{\ell\neq p} T_\ell(A)$. We fix an isomorphism 
\[ (*) \quad  T(A_0)\otimes  \A_f\simeq V\otimes \A_f\] 
  as $F\otimes_\Q \A_f$-modules. Then
  $T(A_0)$ is an $S\otimes \hat \Z$-lattice and there is an $S$-lattice
  $M$ in the vector space $V$ such that 
  $M\otimes_\Z \otimes \hat \Z$ is equal to
  $T(A_0)$ under the fixed rational isomorphism. 

  Let $\varphi:A\to A_0$ be a quasi-isogeny. Then the image
  $L:=\varphi_*(T(A))\subset T(A_0)\otimes \A_f\simeq V\otimes \A_f$
  is an $S\otimes \hat \Z$-lattice. Two quasi-isogenies $\varphi_1$,
  $\varphi_2$ induce the same lattice $L$ if and only if they are the
  same. Conversely, given an $S\otimes \hat \Z$-lattice $L$ in
  $V\otimes \A_f$, then by a theorem of Tate,  
  there is an abelian variety $A$ together with a quasi-isogeny
  $\varphi:A\to A_0$ such that the image $\varphi_*(T(A))$ is equal to
  $L$ under the isomorphism $(*)$. Since there is a national
  one-to-one correspondence between the set of $S$-lattices in $V$ and
  the set of $S\otimes \hat \Z$-lattices in $V\otimes \A_f$. We proved
  the second statement. The last statement follows from the basic
  fact: if a quasi-isogeny $\varphi:A_1\to A_2$ induces an
  isomorphism $T(A_1)\simeq T(A_2)$, then $\varphi:A_1\to A_2$ is an
  isomorphism. This proves the theorem. \qed
\end{proof}

It should be clear that the proof of
Theorem~\ref{41} is straightforward. However, 
it seems that this simple result has not yet been used to classify 
abelian varieties up to isomorphism (over $\Fp$).

\section{Proof of Theorem~\ref{11}}
\label{sec:05}

\subsection{Proof of Theorem~\ref{11}}
\label{sec:51}
We keep the notation as in \S~1 and 2. Since every abelian variety
over $\Fp$ isogenous to $E_0^g$ is superspecial (Lemma~\ref{22}), the
set $\scrS$ classifies isomorphism classes of abelian varieties $A$
over $\Fp$ in the isogeny class of the abelian varieties $E_0^g$. By
Theorem~\ref{41}, the set $\scrS$ is in bijection with the set of
isomorphism classes of $R$-lattices in the vector space $V=E^g$. 

Recall that a genus of $R$-lattices in $V$ is a maximal set of
$R$-lattices  
in which any two $R$-lattices are mutually isomorphic locally 
everywhere. 

\begin{lemma}\label{51}
Let $n\ge 1$ be an integer, and $K$ be an open compact subgroup of
$\GL_n(\A_{E,f})$, where $\A_{E,f}$ is the finite adele ring of the
field $E=\Q(\sqrt{-p})$. Then the determinant map $\det$ induces an
bijection of double coset spaces
\[ \det:\GL_n(E)\backslash \GL_n(\A_{E,f})/K\to E^\times \backslash
\A^\times_{E,f}/\det(K). \]  
\end{lemma}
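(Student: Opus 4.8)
The plan is to show that the determinant map on double cosets is both surjective and injective. Surjectivity is immediate from the surjectivity of $\det: \GL_n(\A_{E,f}) \to \A_{E,f}^\times$ at the adelic level, which simply reflects that the determinant is surjective on each local factor $\GL_n(E_v) \to E_v^\times$; the induced map on the quotient by $K$ on the right and by the rational points on the left inherits this surjectivity directly. So the real content is injectivity of the induced map on double cosets.

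For injectivity, suppose $g_1, g_2 \in \GL_n(\A_{E,f})$ satisfy $\det(g_2) = \gamma \cdot \det(g_1) \cdot \det(k)$ for some $\gamma \in E^\times$ and $k \in K$. I want to produce $\delta \in \GL_n(E)$ and $\kappa \in K$ with $g_2 = \delta g_1 \kappa$. After replacing $g_1$ by $\delta_0 g_1 k$ for a suitable $\delta_0 \in \GL_n(E)$ realizing $\gamma$ as a determinant (embedding $E^\times \embed \GL_n(E)$ via $\gamma \mapsto \diag(\gamma,1,\dots,1)$) we reduce to the case $\det(g_2) = \det(g_1)$, i.e. $g_2 = g_1 h$ where $h \in \GL_n(\A_{E,f})$ has $\det(h) = 1$, so $h \in \SL_n(\A_{E,f})$. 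The task becomes: given $h \in \SL_n(\A_{E,f})$, show that the double coset $\GL_n(E)\, g_1 h\, K$ equals $\GL_n(E)\, g_1\, K$, for which it suffices to find $s \in \SL_n(E)$ and $\kappa \in K$ with $h = g_1^{-1} s g_1 \kappa$, or more simply to absorb $h$ into $\SL_n(E) \cdot K_1$ where $K_1 = K \cap \SL_n(\A_{E,f})$.

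The key tool, and the crux of the argument, is \emph{strong approximation} for the simply connected semisimple group $\SL_{n}$ over $E$, viewed as $R_{E/\Q}\SL_{n,E}$ over $\Q$ — this is exactly the input flagged in the introduction for Lemma~\ref{51}, valid because $n > 1$ so the group is semisimple simply connected and, being a restriction of scalars of $\SL_n$, is not anisotropic at the archimedean places (indeed $\SL_n(E \otimes_\Q \R) = \SL_n(\C)^{[E:\Q]/2}$ is noncompact for $n \ge 2$). Strong approximation asserts that $\SL_n(E)$ is dense in $\SL_n(\A_{E,f})$, hence $\SL_n(E) \cdot K_1$ is all of $\SL_n(\A_{E,f})$ for any open subgroup $K_1$. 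Applying this with $K_1 = g_1^{-1} K g_1 \cap \SL_n(\A_{E,f})$ (still open and compact), I can write the relevant $\SL_n$-part of $h$ as a product of a rational element and an element of $K$, conjugated by $g_1$ as needed, which is precisely what collapses the two double cosets. I expect the main obstacle to be purely bookkeeping: carefully tracking how the $E^\times$-representative is absorbed via the block-diagonal embedding and verifying that the conjugated open subgroup to which strong approximation is applied is indeed open and compact, so that the density statement applies verbatim. The case $n=1$ is excluded since $\SL_1$ is trivial and the statement there is a tautology.
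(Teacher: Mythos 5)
Your proposal is correct and follows essentially the same route as the paper's own proof: split off the determinant class by a block-diagonal rational element, note that the fiber is then governed by $\SL_n$, and collapse it using strong approximation for $R_{E/\Q}\SL_{n,E}$ with a conjugated open compact subgroup (the paper phrases this as the fiber over $[a]$ being the image of $\SL_n(E)\backslash \SL_n(\A_{E,f})/K'$ with $K'=s(a)Ks(a)^{-1}\cap \SL_n(\A_{E,f})$). The only slip is the direction of conjugation in your final step — to realize $h=g_1^{-1}sg_1\kappa$ you must apply strong approximation to $g_1hg_1^{-1}$ with the open subgroup $g_1Kg_1^{-1}\cap \SL_n(\A_{E,f})$ (equivalently, use density of $g_1^{-1}\SL_n(E)g_1$ against $K\cap \SL_n(\A_{E,f})$), not to $h$ with $g_1^{-1}Kg_1\cap \SL_n(\A_{E,f})$ — which is precisely the bookkeeping you flagged and is harmless.
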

\begin{proof}
  We may assume that $n\ge 2$. 
  Clearly that the induced map is surjective. We show the
  injectivity. Let $[a]$ be an element in the target space. Fix a section
  $s:\A_{E,f}^\times \to \GL_n(\A_{E,f})$ of the determinant map. Then
  the inverse image $T_{[a]}$ of the class $[a]$ consists of elements
  $_{\GL_n(E)} [g s(a)]_K$ for all $g\in \SL_n(\A_{E,f})$. The
  surjective map $g\mapsto _{\GL_n(E)} [g s(a)]_K$ induces a
  surjective map 
\[  \alpha: \SL_n(E)\backslash \SL_n(\A_{E,f})/K' \to T_{[a]}, \]
where $K':=s(a) K s(a)^{-1} \cap \SL_n(\A_{E,f})$. 
Since the group $\SL_n$ is simply connected and $\SL_n(E\otimes \R)$ 
is not compact, the strong
approximation holds for the algebraic group $R_{E/\Q}\SL_{n,E}$. 
Therefore, $T_{[a]}$ consists of single elements and one proves 
the lemma. \qed   
\end{proof}

Case (i): $p=2$ or $p \equiv 1 \ (\,{\rm mod}\, 4)$. In this case, the
ring $R$ is 
the maximal order in $E$. Since any $R_v$-lattice in $V_v$ for a
finite place $v$ of $\Q$ is free,
there is only one genus of $R$-lattices in $V$. The set of equivalence
classes in this genus is expressed as the double coset space
$ \GL_g(E)\backslash \GL_g(\A_{E,f})/\GL_g(\hat O_E)$. By
Lemma~\ref{51}, this double coset space is isomorphic to 
$E^\times \backslash \A^\times_{E,f}/\hat O_E^\times$ 
and hence has the cardinality $h(\sqrt{-p})$.  \\

Case (ii): $p\equiv 3 \ (\,{\rm mod}\, 4)$. 
In this case, the ring $R$ has index 2
in the maximal order $O_E$. At the place where $v\neq 2$, the ring
$R_v$ is the maximal order, and hence any two $R_v$-lattice in $V_v$
are isomorphic. At the place where $v=2$, by Corollary~\ref{32} 
there are $g+1$ isomorphism classes of $R_2$-lattices in $V_2$, namely
$R_2^r\oplus O_{E_2}^{g-r}$ for $r=0,\dots, g$. Therefore, there are
$g+1$ genera of $R$-lattices in $V$; those are represented by
$L_r:=R^r\oplus O_E^{g-r}$ for $r=0,\dots,g$. Let $K_r$ be the open
compact subgroup of $\GL_g(\A_{E,f})$ which stabilizes the $R\otimes
\hat \Z$-lattice $L_r\otimes \hat \Z$. Then by Lemma~\ref{51}, we have
\begin{equation}
  \label{eq:51}
  |\scrS|=\sum_{r=0}^g h_r,\quad h_r=\# E^\times \backslash
   \A_{E,f}^\times /\det(K_r).  
\end{equation}
It is easy to see that 
\[ \det(K_r)=
\begin{cases}
  \hat O_E^\times, & r\neq g, \\
  \hat R^\times, & r=g. 
\end{cases} \]
Therefore, $h_r=h(\sqrt{-p})$ for $r=0,\dots,g-1$. 

In the case $r=g$, we have an exact sequence of finite abelian groups:
\[ 1 \to \hat O_E^\times/(\hat O_E^\times\cap E^\times \hat R^\times)\to
\A^\times_{E,f}/E^\times \hat R^\times \to
\A^\times_{E,f}/E^\times \hat O_E^\times \to 1.\]
Since $\hat R^\times \subset \hat O_E^\times$, we have $\hat
O_E^\times\cap E^\times \hat R^\times= O_E^\times \hat R^\times$, and
hence 
\[ h_g=[\hat O_E^\times:O_E^\times \hat R^\times]\, h(\sqrt{-p}). \]
Since $\hat R_2^\times =1+2\hat O_{E_2}$, one has $\hat O_E^\times/\hat
R^\times =(O_E/2O_E)^\times$, which is $1$ or $(\F_4)^\times$ depending
on whether $2$ splits in $E$ (the case where $p\equiv 7 \ (\!\!\mod 8)$) 
or is inert in $E$ (the case where $p\equiv 3 \ (\!\!\mod 8)$). 
On the other hand,
the image of the map $O_E^\times \to (O_E/2O_E)^\times$ is $1$ except
when $p=3$. In the case where $g=3$ this map is surjective.    
It then follows that 
\[ [\hat O_E^\times:O_E^\times \hat R^\times]=
\begin{cases}
  1, & \text{if $p\equiv 7 \ (\,{\rm mod}\, 8)$ or $p=3$},\\
  3, & \text{if $p\equiv 3 \ (\,{\rm mod}\, 8)$ and $p\neq 3$}. 
\end{cases} \]
We conclude
\begin{equation}
  \label{eq:52}
  |\scrS|=
\begin{cases}
  (g+1)h(\sqrt{-p}), & \text{if $p\equiv 7 \ (\,{\rm mod}\, 8)$ 
   or $p=3$},\\
  (g+3)h(\sqrt{-p}), & \text{if $p\equiv 3 \ (\,{\rm mod}\, 8)$ and
  $p\neq 3$}.  
\end{cases}
\end{equation}
Combining Cases (i) and (ii), Theorem~\ref{11} is proved.

\subsection{}
\label{sec:52}
As a final remark, we discuss a bit about Hecke orbits in our
case. An important reference is Chai \cite{chai:ho}, where both $\ell$-adic
and prime-to-$p$ Hecke orbits in Siegel modular varieties 
are defined and explored . 

\begin{defn}
Let $k_0$ be a field, and let $S$ be a set of abelian varieties over
$k_0$. Let $A_0$ be an abelian variety in the set $S$, and $\ell$ be a
rational prime, not necessarily different from the \ch of $k_0$. For a
field extension $k$ of $k_0$, we define
{\it the $\ell$-adic Hecke orbit of $A_0$ over $k$ in $S$} as the
subset of $S$ consisting of all abelian varieties $A$ in $S$ such that
there is an $\ell$-quasi-isogeny from $A$ to $A_0$ over $k$. An
{\it $\ell$-quasi-isogeny} $\varphi:A_1\to A_2$ of two abelian varieties is
a quasi-isogeny such that there is an integer $m\in \N$ such that
$\ell ^m \varphi$ is an isogeny of $\ell$-power degree.  
\end{defn}

We retain the notation as in \S~1 and 2. We have $\scrS=\isog(E_0^g)$
and a natural map $\scrQ:=\qisog(E_0^g)\to \scrS$ which sends
any quasi-isogeny $(\varphi:A\to E_0^g)$ to $[A]$,
where the sets $\isog(E_0^g)$ and $\qisog(E_0^g)$ are defined in
\S~4, and $[A]$ denotes the isomorphism class of $A$ over $\Fp$.  
By Theorem~\ref{41}, there is a one-to-one correspondence between the
set $\scrQ$ and the set $\calL$ of $R$-lattices in $V=E^g$. Under this
correspondence, the set $\scrS$ is in bijection with the set
$\calL/\simeq$ of
isomorphism classes of $R$-lattices of $V$. Let $[L]$ denote the
isomorphism classes of an $R$-lattice $L$ in $V$. 
Suppose that $B$ is an abelian variety in $\scrS$. We choose a
quasi-isogeny $\varphi_0:B\to E_0^g$ and let $L_{\varphi_0}$ be the
$R$-lattice corresponding to $\varphi_0$. If $A$ is an abelian variety
in $\scrS$ such that there is an $\ell$-quasi-isogeny $\varphi$ from
$A$ to $B$ and let $L$ be the corresponding $R$-lattice of the
quasi-isogeny $\varphi_0\circ \varphi:A\to E_0^g$, 
then we have the relative index $[L_{\varphi_0}:L]=\ell^m$ 
for some $m\in \Z$. Recall that
$[L_{\varphi_0}:L]:=[L_{\varphi_0}:L'][L:L']^{-1}$ for any $R$-lattice
$L'$ contained in $L_{\varphi_0}\cap L$. From this, the $\ell$-adic
Hecke orbit of $B$ over $\Fp$ in $\scrS$ corresponds to the following
set
\begin{equation}
  \label{eq:53}
  \calH_\ell([L_{\varphi_0}]):=\{ [L]\in \calL/\simeq \,|\,\
  [L_{\varphi_0}:L]=\ell^m\ \text{for some $m\in \Z$}\, \}.
\end{equation}

In the case $p\equiv 3\ (\!\!\mod 4)$, there are $g+1$ genera
$\calL_0, \dots, \calL_g$ in
$\calL$ represented by the $R$-lattices $L_r=R^r\oplus O_E^{g-r}$ for
$r=0,\dots, g$. 
We further assume that $\ell\neq 2$. Then any two $R_\ell$-lattices
in $V_\ell$ are isomorphic, and 
we have 
\[ \calH_\ell([L_r])\simeq 
\Gamma_{1/\ell} \backslash \GL_g(E\otimes \Q_\ell)/K_{r,\ell}, \]    
where $\Gamma_{1/\ell}:=GL_g(E)\cap \prod_{\ell'\neq\ell} 
K_{r,\ell'}$ and $K_{r,v}$ is the $v$-component of the open compact subgroup
$K_r$. The inclusion $\calH_\ell([L_r])\subset \calL_r/\simeq $ is given by
\[  \Gamma_{1/\ell} \backslash \GL_g(E\otimes \Q_\ell)/K_{r,\ell}
\subset \GL_g(E)\backslash \GL_g(\A_{E,f})/K_r, \]
and we have $\calH_\ell([L_r])=\calL_r/\simeq$ if and only if 
\begin{equation}
  \label{eq:54}
  \GL_g(E)\backslash \GL_g(\A^{\ell}_{E,f})/K_r=1.
\end{equation}
By Lemma~\ref{51}, this is equivalent to $E^\times \backslash
\A^{\ell,\times }_{E,f}/\det(K_r)=1$, or $\Pic(O_E[1/\ell])=1$ if
$r\neq g$ and $\Pic(R[1/\ell])=1$ if $r=g$. Recall that the Picard
group $\Pic(R')$ of a commutative ring $R'$ is the group of
isomorphism classes of locally free $R'$-modules of rank one. 
Note that the condition $\Pic(R[1/\ell])=1$ implies $\Pic(O_E[1/\ell])=1$.
\begin{prop}\label{53}
  Suppose that $p\equiv 3\ (\!\!\mod 4)$ and $\ell$ is an odd
  prime. If $\Pic(R[1/\ell])\\=1$, then 
  there are $g+1$ $\ell$-adic Hecke orbits over $\Fp$ in the set 
  $\scrS$. 
\end{prop}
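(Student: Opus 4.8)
The plan is to identify the $g+1$ $\ell$-adic Hecke orbits with the $g+1$ genera $\calL_0,\dots,\calL_g$, by showing that each genus $\calL_r/\simeq$ is a single Hecke orbit. The first step is to check that an $\ell$-adic Hecke orbit never passes from one genus to another. Since $\ell$ is odd we have $\ell\neq 2$, so an $\ell$-quasi-isogeny alters an $R$-lattice only at the place $\ell$ and leaves its component at $2$---equivalently the Tate module $T_2(A)$---unchanged. By Corollary~\ref{32} the isomorphism class of $T_2(A)$ is recorded by the single integer $r$ with $0\le r\le g$, so $r$ is constant along each Hecke orbit. This yields a well-defined map from the set of $\ell$-adic Hecke orbits to $\{0,1,\dots,g\}$ which is surjective, since $[L_r]$ lies in genus $r$; in particular there are at least $g+1$ orbits, and it remains to see that each fibre is a single orbit.

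The second step is to prove $\calH_\ell([L_r])=\calL_r/\simeq$ for every $r$. This is exactly the equality (\ref{eq:54}) discussed above, so by Lemma~\ref{51} it is equivalent to the vanishing of the prime-to-$\ell$ class group $E^\times\backslash\A^{\ell,\times}_{E,f}/\det(K_r)$. Reusing the computation of $\det(K_r)$ from the proof of Theorem~\ref{11}, now taken away from $\ell$, this group is trivial precisely when $\Pic(O_E[1/\ell])=1$ in the cases $r\neq g$, where the prime-to-$\ell$ part of $\det(K_r)$ is $\hat O_E^\times$, and when $\Pic(R[1/\ell])=1$ in the case $r=g$, where it is $\hat R^\times$.

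Finally I would feed in the hypothesis $\Pic(R[1/\ell])=1$. It settles the case $r=g$ directly, and, by the implication $\Pic(R[1/\ell])=1\Rightarrow\Pic(O_E[1/\ell])=1$ noted just before the proposition, it also settles the cases $r=0,\dots,g-1$. Hence every genus is a single $\ell$-adic Hecke orbit, the map of the first step is a bijection, and there are exactly $g+1$ orbits.

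I expect the only real content to lie in the second step: the reduction of (\ref{eq:54}) through strong approximation (Lemma~\ref{51}) and the correct identification of $\det(K_r)$ away from $\ell$, rather than in any delicate estimate. The one case requiring care is $r=g$, where the non-maximal order $R$ enters and the relevant class group is $\Pic(R[1/\ell])$ rather than $\Pic(O_E[1/\ell])$; this is precisely why the hypothesis is stated in terms of $R$. Everything else is bookkeeping with the dictionary between abelian varieties, lattices and double cosets already set up in Theorem~\ref{41} and the preceding discussion.
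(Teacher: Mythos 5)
Your proposal is correct and is essentially the paper's own argument, which here takes the form of the discussion immediately preceding the proposition: the inclusion $\calH_\ell([L_r])\subset \calL_r/\simeq$, the reduction of the equality $\calH_\ell([L_r])=\calL_r/\simeq$ to condition (\ref{eq:54}) via Lemma~\ref{51}, the identification of the resulting prime-to-$\ell$ class groups with $\Pic(O_E[1/\ell])$ for $r\neq g$ and $\Pic(R[1/\ell])$ for $r=g$, and the remark that $\Pic(R[1/\ell])=1$ implies $\Pic(O_E[1/\ell])=1$. The only point you make more explicit than the paper is the first step, where you invoke Corollary~\ref{32} to show that an $\ell$-adic Hecke orbit cannot move between genera; the paper leaves this implicit in the stated inclusion $\calH_\ell([L_r])\subset \calL_r/\simeq$, so this is a presentational difference rather than a different proof.
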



\begin{thebibliography}{10}

\def\jams{{\it J. Amer. Math. Soc.}} 
\def\invent{{\it Invent. Math.}} 
\def\ann{{\it Ann. Math.}} 
\def\ihes{{\it Inst. Hautes \'Etudes Sci. Publ. Math.}} 

\def\ecole{{\it Ann. Sci. \'Ecole Norm. Sup.}}
\def\ecole4{{\it Ann. Sci. \'Ecole Norm. Sup. (4)}}
\def\mathann{{\it Math. Ann.}} 
\def\duke{{\it Duke Math. J.}} 
\def\jag{{\it J. Algebraic Geom.}} 
\def\advmath{{\it Adv. Math.}}
\def\compos{{\it Compositio Math.}} 
\def\ajm{{\it Amer. J. Math.}}
\def\grenoble{{\it Ann. Inst. Fourier (Grenoble)}}
\def\crelle{{\it J. Reine Angew. Math.}}
\def\mrt{{\it Math. Res. Lett.}}
\def\imrn{{\it Int. Math. Res. Not.}}
\def\acad{{\it Proc. Nat. Acad. Sci. USA}}
\def\tams{{\it Trans. Amer. Math. Sci.}}
\def\cras{{\it C. R. Acad. Sci. Paris S\'er. I Math.}} 
\def\mathz{{\it Math. Z.}} 
\def\cmh{{\it Comment. Math. Helv.}}
\def\docmath{{\it Doc. Math. }}
\def\asian{{\it Asian J. Math.}}
\def\jussieu{{\it J. Inst. Math. Jussieu}}

\def\manmath{{\it Manuscripta Math.}} 
\def\jnt{{\it J. Number Theory}} 
\def\ijm{{\it Israel J. Math.}}
\def\ja{{\it J. Algebra}} 
\def\pams{{\it Proc. Amer. Math. Sci.}}
\def\smfmemoir{{\it Bull. Soc. Math. France, Memoire}}
\def\bsmf{{\it Bull. Soc. Math. France}}
\def\sb{{\it S\'em. Bourbaki Exp.}}
\def\jpaa{{\it J. Pure Appl. Algebra}}
\def\jems{{\it J. Eur. Math. Soc. (JEMS)}}
\def\jtokyo{{\it J. Fac. Sci. Univ. Tokyo}}
\def\cjm{{\it Canad. J. Math.}}
\def\jaums{{\it J. Australian Math. Soc.}}
\def\pspm{{\it Proc. Symp. Pure. Math.}}
\def\ast{{\it Ast\'eriques}}
\def\pamq{{\it Pure Appl. Math. Q.}}
\def\nagoya{{\it Nagoya Math. J.}}
\def\forum{{\it Forum Math. }}

\def\tp{{to appear}}

\newcommand{\princeton}[1]{Ann. Math. Studies #1, Princeton
  Univ. Press}

\newcommand{\LNM}[1]{Lecture Notes in Math., vol. #1, Springer-Verlag}

\bibitem{chai:ho} C.-L. Chai, Every ordinary symplectic isogeny
    class in positive characteristic is dense in the
    moduli. \invent~{\bf 121} (1995), 439--479.


\bibitem{demazure} M. Demazure, {\it Lectures on $p$-divisible groups}.
  \LNM{302}, 1972.

\bibitem{deuring} M. Deuring, Die Typen der Multiplikatorenringe
  elliptischer Funktionenk\"orper. 
  {\it Abh. Math. Sem. Hamburg}~{\bf 14} (1941), 197--272.

\bibitem{deuring:jdm50} M.~Deuring, Die Anzahl der Typen von
  Maximalordnungen einer definiten Quaternionenalgebra mit primer
  Grundzahl. {\it Jber. Deutsch. Math.}~{\bf 54} (1950). 24--41. 

\bibitem{eichler} M. Eichler, \"Uber die Idealklassenzahl total
  definiter Quaternionenalgebren. \mathz~{\bf 43} (1938), 102--109.

\bibitem{ekedahl:ss} T. Ekedahl, On supersingular curves and
   supersingular abelian varieties. {\it Math. Scand.}~{\bf 60} (1987),
   151--178.

\bibitem{gekeler:finite} E.-U. Gekeler, On finite Drinfeld
  modules. \ja~{\bf 141} (1991), 187--203.  

\bibitem{gekeler:mass} E.-U. Gekeler, On the arithmetic of some
  division algebras. \cmh~{\bf 67} (1992), 316--333.  

\bibitem{hashimoto-ibukiyama:classnumber} K. Hashimoto and
   T. Ibukiyama, On class numbers of positive definite binary
   quaternion hermitian forms, \jtokyo~{\bf 27} (1980), 549--601.




\bibitem{ibukiyama-katsura} T. Ibukiyama and T. Katsura, On
  the field of definition of superspecial polarized abelian varieties
  and type numbers. \compos~{\bf 91} (1994), 37--46.   

\bibitem{ibukiyama-katsura-oort} T. Ibukiyama, T. Katsura and F. Oort,
   Supersingular curves of genus two and class numbers.
   \compos~{\bf 57} (1986), 127--152.

\bibitem{katsura-oort:surface} T. Katsura and F. Oort, Families of
   supersingular abelian surfaces, \compos~{\bf 62} (1987), 107--167.

\bibitem{manin:thesis} Yu. Manin, Theory of commutative formal groups
  over fields of finite characteristic. 
  {\it Russian Math. Surveys}~{\bf 18} (1963), 1--80.

\bibitem{moret-bailly:p1} L. Moret-Bailly, Familles de courbes et de
   vari\'et\'es ab\'eliennes sur ${\mathbf P}^1$. S\'em. sur les
   pinceaux de courbes de genre au moins deux
   (ed. L. Szpiro). \ast~{\bf 86} (1981), 109--140.

\bibitem{oort:product} F. Oort, Which abelian surfaces are products of
  elliptic curves. \mathann~{\bf 214} (1975), 35--47.



\bibitem{shimura:1999} G. Shimura, Some exact formulas for quaternion
  unitary groups. \crelle~{\bf 509} (1999), 67--102.


\bibitem{tate:ht} J. Tate, Classes d'isogenie de vari\'et\'es
  ab\'eliennes sur un corps fini (d'apr\`es T. Honda). \sb~{352}
  (1968/69). \LNM{179}, 1971.

\bibitem{tate:eav} J. Tate, Endomorphisms of abelian varieties over
  finite fields. \invent~{\bf 2} (1966), 134--144.


\bibitem{waterhouse:thesis} W.~C.~Waterhouse, Abelian
  varieties over finite fields. \ecole4~{\bf 1969}, 521--560.  








\bibitem{yu-yu:ssd} C.-F.~Yu and J.~Yu, Mass formula for supersingular 
  Drinfeld modules. \cras~{\bf 338} (2004) 905--908.

\bibitem{yu:mass_hb} C.-F.~Yu, On the mass formula of supersingular
  abelian varieties with real multiplications. \jaums~{\bf 78} (2005),
  373--392.

\bibitem{yu:gmf} C.-F. Yu, An exact geometric mass
  formula. \imrn~{\bf 2008}, Article ID rnn113, 11 pages. 

\bibitem{yu:gamma_hb} C.-F. Yu, Irreducibility of Hilbert-Blumenthal
  moduli spaces with parahoric level structure. \crelle~{\bf 635}
  (2009), 187--211. 

\bibitem{yu-yu:mass_surface}
  C.-F. Yu and J.-D. Yu, Mass formula for supersingular abelian
  surfaces. \ja~{\bf 322} (2009), 3733--3743. 

\bibitem{yu:smf} C.-F. Yu, Simple mass formulas on Shimura varieties
  of PEL-type. To appear in \forum


\end{thebibliography}
\end{document}